\definecolor{qqqqff}{rgb}{0.,0.,1.}
\definecolor{cqcqcq}{rgb}{0.7529411764705882,0.7529411764705882,0.7529411764705882}
\definecolor{ttqqqq}{rgb}{0.2,0.,0.}
\definecolor{qqqqff}{rgb}{0.,0.,1.}
\definecolor{xdxdff}{rgb}{0.49019607843137253,0.49019607843137253,1.}
\definecolor{zzttqq}{rgb}{0.6,0.2,0.}
\definecolor{cqcqcq}{rgb}{0.7529411764705882,0.7529411764705882,0.7529411764705882}
\definecolor{yqyqyq}{rgb}{0.5019607843137255,0.5019607843137255,0.5019607843137255}
\definecolor{uuuuuu}{rgb}{0.26666666666666666,0.26666666666666666,0.26666666666666666}
\definecolor{xdxdff}{rgb}{0.49019607843137253,0.49019607843137253,1.}
\definecolor{qqqqff}{rgb}{0.,0.,1.}
 \font\ncsc=cmcsc10
 \font\ntt=cmtt12
\newcommand{\PP}{\mathbb{P}}
\newcommand{\ZZ}{\mathbb{Z}}
\newcommand{\RR}{\mathbb{R}}
\newcommand{\CC}{\mathbb{C}}
\newcommand{\dd}{ \text{d} }
\newcommand{\partrop}{{\partial,\text{trop}}}
\newcommand{\der}[1]{\frac{\dd #1}{\dd t}}
\newtheorem{theo}{Theorem}[section]
\newtheorem{prop}[theo]{Proposition}
\theoremstyle{definition}
\newtheorem{defi}[theo]{Definition}
\theoremstyle{remark}
\newtheorem{remark}[theo]{Remark}
\newenvironment{rem}[1]{
    \begin{remark}#1}{
    \xqed{\blacklozenge}\end{remark}
}
\theoremstyle{remark}
\newtheorem{example}[theo]{Example}
\newenvironment{expl}[1]{
    \begin{example}#1}{
    \xqed{\lozenge}\end{example}
}
\newcommand{\xqed}[1]{
    \leavevmode\unskip\penalty9999 \hbox{}\nobreak\hfill
    \quad\hbox{\ensuremath{#1}}}
\begin{document}
 
 
\title{A Caporaso-Harris type Formula for relative refined invariants}
\author{Thomas Blomme}
\maketitle

\begin{abstract}
    In \cite{mikhalkin2017quantum}, G. Mikhalkin introduced a refined count for real rational curves  in a toric surface which pass through some points on the toric boundary of the surface. The refinement is provided by the value of a so-called quantum index. Moreover, he proved that the result of this refined count does not depend on the choice of the points. The correspondence theorem allows one to compute these invariants using the tropical geometry approach and the refined Block-G\"ottsche multiplicities. In this paper we give a recursive formula for these invariants, that leads to an algorithm to compute them.
\end{abstract}

\tableofcontents

\section{Introduction}


The paper deals with enumerative problems involving rational curves in toric surfaces. Let $N$ be a $2$-dimensional lattice with basis $(e_1,e_2)$, and let $M$ be the dual lattice, with the dual basis $(e_1^*,e_2^*)$. The lattices $M$ and $N$ are called respectively, the lattices of characters and co-characters. Let $\Delta=(n_j)\subset N$ be a multiset of lattice vectors, whose total sum is zero. We denote by $m$ the cardinal of $\Delta$. Let $P_\Delta\subset M$ be the convex lattice polygon in $M$, defined up to translation, whose normal vectors oriented outside $P_\Delta$ are the vectors of $\Delta$, counted with multiplicity. This means that the lattice length of a side $E$ of $P_\Delta$ is precisely the sum of the lattice lengths of the vectors of $\Delta$ which are normal to the side $E$. The number of lattice points on the boundary of $P_\Delta$ is thus equal to the sum of the lattice lengths of the vectors of $\Delta$. The vectors of $\Delta$ define a fan $\Sigma_\Delta$ in $N_\RR$, only depending on $P_\Delta$. We denote by $\CC\Delta$ the associated toric surface, whose dense complex torus is $\text{Hom}(M,\CC^*)\simeq N\otimes\CC^*$. The toric divisors  of $\CC\Delta$ are in bijection with the sides of the polygon $P_\Delta$. The complex conjugation on $\CC$ defines an involution on $\text{Hom}(M,\CC^*)$, which extends to $\CC\Delta$. This additional structure makes it into a real surface. Its fixed locus, also called the real locus, is denoted by $\RR\Delta$.

\begin{expl}
\begin{itemize}[label=-]
\item Let $\Delta_d=\{-e_1^d,-e_2^d,(e_1+e_2)^d\}$, where the $d$ exponent means that the vector is present $d$ times in the multiset. The associated polygon is $P_d=\text{Conv}( 0,d\cdot e_1^*,d\cdot e_2^* )$, the standard triangle of size $d$. The associated toric surface is the projective plane $\CC P^2$, and the real locus is the real projective plane $\RR P^2$.
\item If we set $\square_{a,b}=\{-e_1^b,e_1^b,-e_2^a,e_2^b\}$, the polygon $P_{a,b}$ is the rectangle $\text{Conv}( 0,a\cdot e_1^*,b\cdot e_2^*,a\cdot e_1^*+b\cdot e_2^* )$, and the associated toric surface is $\CC P^1\times\CC P^1$, whose real locus is $\RR P^1\times\RR P^1$.
\end{itemize}
\end{expl}
A rational curve $\CC P^1\rightarrow\CC \Delta$ of degree $\Delta$ is a curve that admits a parametrization
$$\varphi :t\in\CC \dashrightarrow \chi\prod_{i=1}^m(t-\alpha_i)^{n_i}\in\text{Hom}(M,\CC^*),$$
where the numbers $\alpha_i$ are some complex points inside $\CC$, and $\chi:M\rightarrow\CC^*$ is a complex co-character, \textit{i.e.} an element of $N\otimes\CC^*$. The vectors $n_j$ can also be recovered as the functions $m\mapsto\text{val}_{\alpha_j}(\chi^m)$, where $\chi^m$ denotes the monomial function associated to the character $m\in M$, and $\text{val}_{\alpha_j}$ is the order of vanishing at $\alpha_j$. Such a parametrization is unique up to the automorphisms of $\CC P^1$, therefore the space of rational curves $\CC P^1\rightarrow\CC\Delta$ is of dimension $m-1$. A rational curve is real if it admits a parametrization invariant by conjugation. Equivalently, $\chi$ should have real values, and the numbers $\alpha_i$ are either real, or come in pairs of conjugated points with the same exponent vector $n_j$.\\

We choose a generic configuration $\mathcal{P}$ of $m-1$ points inside $\CC\Delta$, and look for rational curves passing through this configuration. We have a finite number of complex solutions. The cardinal $|\mathcal{S}^\CC(\mathcal{P})|$ of the set $\mathcal{S}^\CC(\mathcal{P})$ of solutions is independent of the choice of the point configuration $\mathcal{P}$, and the value of this cardinal is denoted by $N_\Delta$. We now consider a configuration of points $\mathcal{P}$ which is invariant by the complex conjugation, \textit{i.e.} it consists of real points and pairs of conjugated points. Such a configuration is called a \textit{real configuration of points}. If all the points are real, it is a configuration of real points, otherwise we call it a \textit{non-totally real configuration}. Contrarily to the complex case, the number $|\mathcal{S}^\RR(\mathcal{P})|$ of real curves passing through the configuration $\mathcal{P}$ might depend on the chosen configuration. However, in \cite{welschinger2005invariants} J-Y. Welschinger showed that if the toric surface $\CC\Delta$ is a del Pezzo surface and the curves are counted with an appropriate sign, the number of solutions in $\mathcal{S}^\RR(\mathcal{P})$ only depends on the number of pairs of conjugated points in the configuration. This invariant is called Welschinger invariant, often denoted by $W_{\Delta,s}$, where $s$ is the number of pairs of conjugated points.\\

The values of $N_\Delta$ were already known when Welschinger proved his invariants to be invariants. And roughly at the same time, Mikhalkin \cite{mikhalkin2005enumerative} proved a correspondence theorem that provided a way of computing both invariants $N_\Delta$ and Welschinger invariants $W_{\Delta,0}$ in the case of a configuration of real points, using the tropical geometry approach. Later E. Shustin \cite{shustin2002patchworking} also computed the invariants $W_{\Delta,0}$, and managed in \cite{shustin2004tropical} to make a tropical calculation of the $W_{\Delta,s}$ for any $s$, \textit{i.e.} not only in the case of a totally real configuration.\\

To compute the values of $N_\Delta$ and $W_{\Delta,0}$, Mikhalkin counts tropical curves solution to a similar enumerative problem with two specific choices of multiplicity. Following his computation, F. Block and L. G\"ottsche \cite{block2016refined} proposed a way of combining these integer multiplicities by refining them into a Laurent polynomial multiplicity. This polynomial multiplicity evaluated at $\pm 1$ gives back the multiplicities used to compute the invariants $N_\Delta$ and $W_{\Delta,0}$. Moreover, the counting of tropical curves of fixed degree passing through a generic configuration of points using refined multiplicity was proved in \cite{itenberg2013block} to give a tropical invariant. This new choice of multiplicity seems to appear in more and more situations, while its meaning in classical geometry remains quite mysterious. Conjecturally, the refined invariant coincides with the refinement of Severi degrees by the $\chi_{-y}$-genera proposed by L. G\"ottsche and V. Shende in \cite{gottsche2014refined}. This invariant bears also similarities with some Donaldson-Thomas wall-crossing invariants considered by M. Kontsevich and Y. Soibelman \cite{kontsevich2008stability}.\\

In \cite{mikhalkin2017quantum} Mikhalkin introduced a quantum index for real type I curves having real or purely imaginary intersection points with the toric boundary. Not yet going into further details, this is the case of real rational curves with real intersection points with the toric divisors. He then proved that a signed count of real rational curves passing through a symmetric real configuration of points according to the value of their quantum index only depends on the number of pairs of complex conjugated points in the configuration. The details of the latter sentence are explained after Theorem \ref{quantumindex}. He finally proved that in the case of a configuration of real points (\textit{i.e.} no pairs of complex conjugated points), this new invariant could be computed via tropical geometry using the same Block-G\"ottsche refined multiplicity. This provides another interpretation of these mysterious refined tropical invariants.\\

Let $\varphi:\CC C\rightarrow\CC\Delta$ be a parametrized real curve of degree $\Delta$, with $\CC C$ a smooth Riemann surface. It is of type I if $\CC C\backslash\RR C$ is disconnected, and therefore consists of two conjugated components. Let $S$ be one of these components. It induces an orientation of its boundary $\RR C$, also called a complex orientation of $\RR C$. The choice of the other connected component provides the reversed orientation. When $S$ is given, we denote by $\overrightarrow{C}$ the curve endowed with the orientation induced by $S$. The map $n\otimes z\mapsto 2 n\otimes z$ from $N\otimes\CC^*$ to itself extends to a map $\text{Sq}$ from $\CC\Delta$ to itself, called the square map. In coordinates it is just the map that squares each coordinate. We say that $\CC C$ has real or purely imaginary intersection points with the toric divisors if the images of the intersection points under the square map are real, \textit{i.e.} belong to $\RR\Delta$. We have the logarithmic map
$$\text{Log}:n\otimes z\in N\otimes\CC^*\longmapsto n\otimes\text{Log}|z|\in N_\RR=N\otimes\RR.$$
In a basis of $N$, it is just the logarithm of the absolute value coordinate by coordinate. Let $\omega$ be a generator of $\Lambda^2 M$, \textit{i.e.} a non-degenerated $2$-form on $N$. It extends to a volume form on $N_\RR$. By pulling back the volume form to $N\otimes\CC^*$, we can define the logarithmic area of $S$ :
$$\mathcal{A}_{\text{Log}}(\overrightarrow{C})=\int_{\varphi(S)}\text{Log}^*\omega.$$
The other connected component of $\CC C\backslash\RR C$ has opposite log-area.

\begin{theo}[Mikhalkin \cite{mikhalkin2017quantum}]
\label{quantumindex}
Let $\CC C\subset\CC\Delta$ be a real type I curve with real or purely imaginary intersection points with the toric divisors, endowed with the choice $S$ of a connected component of $\CC C\backslash\RR C$, inducing a complex orientation of $\RR C$. Then, there exists an half-integer $k$, called the quantum index of the oriented curve $\overrightarrow{C}$, such that
$$\mathcal{A}_{\text{Log}}(\overrightarrow{C})=k\pi^2.$$
\end{theo}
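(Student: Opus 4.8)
The plan is to express $\mathcal{A}_{\text{Log}}(\overrightarrow C)$ as the integral over $S$ of the pullback of the \emph{argument} form, to recognize that integral as a degree-type quantity, and then to read off the normalization $k\pi^2$ from the behaviour of $\varphi$ along $\RR C$ and at the toric boundary.

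First I would fix a basis of $M$, giving coordinates $(z_1,z_2)$ on the dense torus $N\otimes\CC^*$ in which $\text{Log}^*\omega = d\log|z_1|\wedge d\log|z_2|$ and $\text{Arg}^*\omega = d\arg z_1\wedge d\arg z_2$, where $\text{Arg}:N\otimes\CC^*\to T:=N\otimes(\RR/2\pi\ZZ)$ is the argument map and $T$ is the compact real torus on which $\omega$ is read as the translation-invariant area form $\omega_T$ (of total mass $4\pi^2=(2\pi)^2$). The holomorphic $2$-form $\tfrac{dz_1}{z_1}\wedge\tfrac{dz_2}{z_2}$ pulls back to $\CC C$ under $\varphi$; as $\CC C$ is a Riemann surface it is $1$-dimensional, so this pullback vanishes identically. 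Writing $\tfrac{dz_j}{z_j}=d\log|z_j|+i\,d\arg z_j$ and taking real parts, the vanishing gives $\varphi^*\text{Log}^*\omega = \varphi^*\text{Arg}^*\omega$ as $2$-forms on $\CC C$, whence
\[
\mathcal{A}_{\text{Log}}(\overrightarrow C)=\int_{\varphi(S)}\text{Log}^*\omega=\int_{S}(\text{Arg}\circ\varphi)^*\omega_T .
\]
Inspecting a local normal form $z_j\sim c_j u^{a_j}$ near a point where $\varphi$ meets the toric boundary shows $(\text{Arg}\circ\varphi)^*\omega_T$ stays bounded there, so the integral converges and $\mathcal{A}_{\text{Log}}(\overrightarrow C)$ is finite.

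Next I would analyze the boundary. The compact surface $\overline S$ has $\partial\overline S=\RR C$, and $\varphi(\RR C)\subset\RR\Delta$, so away from the toric divisors $\text{Arg}\circ\varphi$ takes values in the four points of $T$ with coordinates in $\{0,\pi\}$; being continuous on each arc, it is locally constant there. Composing with multiplication by $2$ on $T$, which is $\text{Arg}\circ\text{Sq}=2\,\text{Arg}$ since $\text{Sq}$ squares the coordinates, produces $g:=2(\text{Arg}\circ\varphi)$, which is locally constant equal to $0$ on $\RR C$ minus the toric-boundary points, and satisfies $g^*\omega_T=4(\text{Arg}\circ\varphi)^*\omega_T$. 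Removing small disks $D_p$ around each point $p$ where $\varphi$ meets a toric divisor, capping off the (now trivial) arcs of $\RR C$ by constant maps, one is left with a map from a closed surface to $T$, whose contribution to $\int g^*\omega_T$ is $4\pi^2$ times its degree, an integer, plus a sum of local correction terms $I_p$ supported near the $D_p$. Dividing by $4$,
\[
\mathcal{A}_{\text{Log}}(\overrightarrow C)=\pi^2\,(\text{integer})-\tfrac14\sum_p I_p ,
\]
and it remains to evaluate each $I_p$. Using the normal form $z_j\sim c_j u^{a_j}$ at $p$ together with the hypothesis that $\text{Sq}(\varphi(p))$ is real or purely imaginary, i.e. $2\arg c_j\in\pi\ZZ$ for the coordinate(s) that do not vanish at $p$, the local term $I_p$ is forced to be a multiple of $\pi^2$; summing over $p$ and dividing by $4$ accounts precisely for the possible failure of $\mathcal{A}_{\text{Log}}(\overrightarrow C)/\pi^2$ to be an integer, leaving it a half-integer. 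Setting $k:=\mathcal{A}_{\text{Log}}(\overrightarrow C)/\pi^2$ gives the theorem.

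The main obstacle is this last step: making the ``degree plus local corrections'' decomposition rigorous when $\overline S$ has positive genus and several boundary circles, and in particular pinning down $I_p$. Concretely one must choose a primitive of $\omega_T$ either on a cover of $T$ or on the complement of a coordinate circle $\{\theta_1=\text{const}\}$, track the $2\pi$-jump of that primitive across the corresponding locus $\varphi^{-1}(\{z_1\in i\RR^*\})\subset S$, and carry out the resulting arithmetic of winding numbers and arc endpoints at the punctures — this is exactly where ``real or purely imaginary intersection with the toric divisors'' is used, and where the half-integer (rather than integer) normalization emerges.
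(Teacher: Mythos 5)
The statement you are proving is not proved anywhere in this paper; it is quoted as Theorem~\ref{quantumindex} with an explicit attribution to Mikhalkin's paper \cite{mikhalkin2017quantum}, and all subsequent use treats it as a black box. There is therefore no ``paper's own proof'' to compare against, and the review is really a comparison of your sketch with Mikhalkin's argument, which you presumably could not see either.

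Your high-level strategy does match Mikhalkin's. The key opening move --- that the vanishing of $\varphi^*\bigl(\tfrac{dz_1}{z_1}\wedge\tfrac{dz_2}{z_2}\bigr)$ on a Riemann surface equates the pullbacks of $d\log|z_1|\wedge d\log|z_2|$ and $d\arg z_1\wedge d\arg z_2$, so that the log-area becomes an integral of the pulled-back area form on the compact torus $T=N\otimes(\RR/2\pi\ZZ)$ --- is correct and is indeed how the log-area is converted to a topological/degree-type quantity in Mikhalkin's proof. The next step, exploiting that $\text{Arg}\circ\varphi$ carries $\RR C$ into the $2$-torsion of $T$ (so that after composing with the squaring map the boundary goes to the base point), is also the right idea.

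The gap is exactly where you flag it: the ``local corrections'' $I_p$ at the points where $\varphi$ meets the toric divisors. This is not a routine bookkeeping step but the heart of the theorem. Since $\arg z_j$ has no limit along a branch of the curve approaching a divisor where $z_j\to 0$ or $\infty$, the map $\text{Arg}\circ\varphi$ does not extend across those boundary punctures; one has to choose a primitive of $\omega_T$, control its monodromy, and track quarter-turns of the argument around each puncture. The hypothesis ``real or purely imaginary intersection with the toric boundary'' enters precisely here (it pins the residual constants $c_j$ in your normal form to lie in $\RR\cup i\RR$, forcing the local contributions into $\tfrac{\pi}{2}\ZZ$), and the half-integer normalization (as opposed to integer) emerges from that count. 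As written, your argument announces this computation as an ``obstacle'' without carrying it out, so it establishes at best that $\mathcal{A}_{\text{Log}}(\overrightarrow{C})/\pi^2$ is rational, not that it is a half-integer.
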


This theorem concerns all real type I curves but will only be of interest to us in the case of rational curves with real intersection points with the toric boundary. Such curves have a well-defined quantum index that can be computed through various methods described in \cite{mikhalkin2017quantum}. Moreover, being equal to the logarithmic area, the quantum index changes its sign if we change the complex orientation of the curve.\\

Now, let $\mathcal{P}_0$ be a configuration of $m-1$ real points taken on the toric boundary of $\RR\Delta$, and such that each irreducible component of the boundary does not contain a number of points greater than the integral length of the corresponding side of the polygon $P_\Delta$, \textit{e.g.} for the projective plane, there are two axes with $d$ points and the last axis only has $d-1$. The Vi\`ete formula, or the Weil reciprocity law, ensures that there exists a last $m$th point on the boundary such that each curve of degree $\Delta$ passing through $\mathcal{P}_0$ also passes through this point. For each point $p\in\CC\Delta$, the action of the torus element given by the point $(-1,-1)$ in any coordinates defines a symmetric point $-p$. If $p$ is inside some toric divisor $D$, $-p$ is also in $D$. Let $\mathcal{P}=\mathcal{P}_0\cup(-\mathcal{P}_0)=\{ \pm p_i\ | \ p_i\in\mathcal{P}_0 \}$ be the symmetric configuration of points obtained from $\mathcal{P}_0$. Let $\mathcal{S}(\mathcal{P})$ be the set of real oriented rational curves of degree $\Delta$ passing through either $p_i$ or $-p_i$ for every $i$. In that case we say that a curve \textit{passes through the symmetric configuration}, although it passes only through half of it. Equivalently, if $\text{Sq}:\CC\Delta\rightarrow\CC\Delta$ denotes the square map, the set $\mathcal{S}(\mathcal{P})$ is the set of real oriented rational curves $C$ of degree $\Delta$ such that $\text{Sq}(\RR C)\supset\text{Sq}(\mathcal{P}_0)$. As the curves of $\mathcal{S}(\mathcal{P})$ are oriented, each curve comes twice, once with each of its possible orientations. All the oriented curves of $\mathcal{S}(\mathcal{P})$ have a well-defined quantum index. For an oriented curve $\overrightarrow{C}\in\mathcal{S}(\mathcal{P})$, let $\varphi:\CC P^1\rightarrow \CC C$ be a parametrization of $C$. We consider the composition of the parametrization $\varphi$ with the logarithm map, restricted to the real locus :
$$\text{Log}|\varphi|:\RR P^1\rightarrow N_\RR.$$
Its image is $\text{Log}(\RR C)\subset N_\RR$. We now consider the logarithmic Gauss map that associates to each point $x\in\RR C$ the tangent direction to $\text{Log}(\RR C)$ at the point $\text{Log}x$. This tangent direction lies in in $\PP^1(N_\RR)\simeq\RR P^1$, oriented by $\omega$. Let $\text{Rot}_\text{Log}(\overrightarrow{C})$ be the degree of this map when the parametrizing $\RR P^1$ is oriented by the choice of the complex orientation of $C$, and $\sigma(\overrightarrow{C})=(-1)^\frac{m-\text{Rot}_\text{Log}(\overrightarrow{C})}{2}=\pm 1$. We then set
$$R_{\Delta,k}(\mathcal{P})=\sum_{C\in\mathcal{S}_k(\mathcal{P})} \sigma(\overrightarrow{C})\in\ZZ,$$
where $\mathcal{S}_k(\mathcal{P})$ denotes the subset of $\mathcal{S}(\mathcal{P})$ of oriented curves having quantum index $k$. Finally, we define
$$R_\Delta(\mathcal{P})=\frac{1}{4}\sum_{k}R_{\Delta,k}(\mathcal{P})q^k\in \ZZ\left[ q^{\pm\frac{1}{2}}\right].$$

\begin{theo}[Mikhalkin\cite{mikhalkin2017quantum}]
The Laurent polynomial $R_\Delta(\mathcal{P})$ does not depend on the configuration of points $\mathcal{P}$ as long as it is generic.
\end{theo}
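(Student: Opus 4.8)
The plan is to run the usual deformation argument for enumerative invariants: show that $R_\Delta(\mathcal{P})$ is locally constant on the space of generic configurations, and that it does not change when a generic one-parameter family of configurations crosses a wall of non-generic configurations. (This is essentially Mikhalkin's argument; I only sketch its shape.)

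First I would describe the deformation space. Fix the combinatorial type of $\mathcal{P}_0$, i.e.\ the number of its points lying on each toric divisor of $\RR\Delta$ (subject to the stated upper bounds); such configurations form a connected smooth manifold $\mathcal{C}$ of dimension $m-1$, inside which the non-generic configurations form a closed subset $W$ of codimension at least one. Any two generic configurations can be joined by a path in $\mathcal{C}$ transverse to $W$, meeting only the codimension-one part of $W$ and in finitely many points. Over $\mathcal{C}\setminus W$ the curves of $\mathcal{S}(\mathcal{P})$ are unobstructed and the evaluation map recording the positions of the marked boundary points is a local diffeomorphism onto a product of toric divisors, so by the implicit function theorem the family $\mathcal{S}(\mathcal{P}(s))$ forms a finite covering of any wall-free arc: the number of oriented solutions is constant and each deforms continuously. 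Along such an arc $\mathcal{A}_{\text{Log}}(\overrightarrow{C}(s))$ varies continuously, so by Theorem~\ref{quantumindex} the half-integer $k(s)=\mathcal{A}_{\text{Log}}(\overrightarrow{C}(s))/\pi^2$ is constant; likewise $\text{Rot}_{\text{Log}}(\overrightarrow{C}(s))\in\ZZ$, being the degree of a continuously varying map $\RR P^1\to\PP^1(N_\RR)$, is constant, hence $\sigma(\overrightarrow{C}(s))$ is constant. Therefore each $R_{\Delta,k}(\mathcal{P})$, and thus $R_\Delta(\mathcal{P})$, is locally constant on the generic locus.

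Next I would examine the codimension-one walls. A dimension count shows that along a generic one-parameter family the only degenerations a solution curve can undergo are: the curve acquires a single node (possibly becoming reducible); a marked point collides with a corner of $P_\Delta$ or with the auxiliary $m$-th boundary point; or --- the decisive case --- two real solutions come together and leave the real locus as a pair of complex-conjugate curves (or the reverse). At a non-real wall one finds, on one side, two real oriented solutions $\overrightarrow{C}_1,\overrightarrow{C}_2$ that merge at the wall curve and disappear on the other side; since their log-areas have the same limit they carry a common quantum index $k$, and the heart of the matter is to show that crossing the wall changes $\text{Rot}_{\text{Log}}$ by $\pm 2$, so that $\sigma(\overrightarrow{C}_1)=-\sigma(\overrightarrow{C}_2)$ and the contributions $\sigma(\overrightarrow{C}_1)q^k+\sigma(\overrightarrow{C}_2)q^k$ cancel. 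For the remaining wall types one checks, again by dimension counting together with the explicit local model of the degeneration, that the solutions through the wall occur in both adjacent chambers with matching quantum index and sign, so there is no net change. Local constancy together with the vanishing of all wall contributions then yields the invariance.

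The main obstacle is precisely the local analysis at the non-real wall: one must exhibit the correct normal form for two real branches of the solution variety meeting at a wall point and compute the jump of $\text{Rot}_{\text{Log}}$ --- a degree of the logarithmic Gauss map --- across it. Conceptually this seems unavoidable, since the signs $\sigma$ are tailored so that $R_{\Delta,k}(\mathcal{P})$ is the topological degree of the evaluation map from the suitably oriented moduli space of real oriented rational curves of degree $\Delta$ and quantum index $k$ to the product of the toric divisors; from that viewpoint invariance is automatic by homotopy invariance of degree, at the cost of establishing properness of this evaluation map (control of the boundary of the moduli space) and of identifying $\sigma$ with its local Jacobian sign --- which is the same computation in another guise.
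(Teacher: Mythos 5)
This theorem is stated in the paper only as a citation from Mikhalkin \cite{mikhalkin2017quantum}; the paper under review gives no proof of it, so there is no ``paper's own proof'' to compare your sketch against. What I can say is how your sketch relates to the problem as the paper sets it up, and where it is thin relative to what Mikhalkin actually has to do.

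Your outline --- local constancy on the generic stratum plus cancellation of wall contributions, organized around the degree of an evaluation map --- is the right family of arguments and is indeed the flavor of Mikhalkin's proof. But as written it does not engage with the features of the problem that are specific to this setting and that make the cancellation work; in particular it never uses the fact that the curves are required to pass through the \emph{symmetric} configuration $\mathcal{P}=\mathcal{P}_0\cup(-\mathcal{P}_0)$ rather than through $\mathcal{P}_0$ alone. The paper explicitly flags this as essential (``It is important to the result and for its proof not only to consider curves going through $\mathcal{P}_0$ but also through the symmetric configuration $\mathcal{P}$, otherwise the invariance fails''), so any proof that does not appeal to this symmetry --- as yours does not --- cannot be complete: at some wall a solution passing through $p_i$ must be matched against one passing through $-p_i$, and without that pairing the contributions do not cancel. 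Likewise, because the constraints are boundary points on the toric divisors (with the extra automatic $m$-th point imposed by the Menelaus/Weil relation), the relevant list of codimension-one degenerations is not merely the generic ``node formation / reducibility / real pair collapsing'' list: one also has to control what happens when an intersection point with a toric divisor degenerates from real to purely imaginary, which is precisely the quasi-real/purely-imaginary dichotomy Mikhalkin builds in. Your sketch identifies that the hard step is computing the jump of $\text{Rot}_{\text{Log}}$ across the non-real wall, which is correct, but simply stating that this jump equals $\pm 2$ is the substance of the theorem, not a reduction of it; the same remark applies to the claimed properness of the evaluation map, which in Mikhalkin's setting requires a separate compactness analysis of the boundary of the real moduli space. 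In short: plausible scaffolding, but the load-bearing parts --- use of the $\pm$-symmetry, the classification of boundary degenerations for boundary constraints, and the actual computation of the sign jump --- are left as assertions rather than proved.
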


\begin{rem}
This theorem is, in fact, more general since it does not concern only configurations of real points but also real configuration of points, which may contain pairs of conjugated purely imaginary points. However, the relation to refined tropical enumeration is for now only known in the case of a totally real configuration and that is why we restrict to it.
\end{rem}

\begin{rem}
It is important to the result and for its proof not only to consider curves going through $\mathcal{P}_0$ but also through the symmetric configuration $\mathcal{P}$, otherwise the invariance fails.
\end{rem}

In the tropical world, one can define a similar problem: finding rational tropical curves passing through a fixed configuration of points at infinity. The count of the curves with the Block-G\"ottsche multiplicities does not depend on the chosen configuration of point and its value is denoted by $N_\Delta^\partrop$.
In the case of a configuration of real points, using the correspondence theorem, Mikhalkin proved that the invariant $R_\Delta$ coincides up to a normalization with $N_\Delta^{\partial\text{trop}}$.

\begin{theo}[Mikhalkin\cite{mikhalkin2017quantum}]
One has
$$R_\Delta=(q^{1/2}-q^{-1/2})^{m-2}N_\Delta^{\partial,\text{trop}}.$$
\end{theo}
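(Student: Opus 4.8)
The plan is to evaluate $R_\Delta(\mathcal P)$ on a point configuration $\mathcal P_0$ placed in \emph{tropically generic} position on the toric boundary and to match the result term by term with the tropical count. Concretely one lets the boundary points $p_i$ degenerate so that their images under $\text{Log}$ escape to infinity along the rays of $\Sigma_\Delta$ with pairwise very different speeds (equivalently, one works over a non-Archimedean base). Mikhalkin's correspondence theorem \cite{mikhalkin2005enumerative}, in the version for curves meeting the toric divisors at prescribed points, then identifies the real rational curves of degree $\Delta$ through the symmetric configuration $\mathcal P$ with the real refinements of rational tropical curves $\Gamma$ of degree $\Delta$ through the induced configuration of $m-1$ points at infinity. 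Since $\mathcal P_0$ is generic, each such $\Gamma$ is a trivalent tree with $m$ ends and hence exactly $m-2$ vertices, and these are precisely the curves weighted by $N_\Delta^{\partial,\text{trop}}$; the $m$th constraint being automatic here mirrors the Viète/Weil reciprocity constraint on the classical side.

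Next I would carry out a local analysis at each trivalent vertex $V$ of a solution curve $\Gamma$, dual to a triangle of lattice area $w_V$. Using Mikhalkin's normal forms for the square map and for the logarithmic Gauss map near $V$, one describes the finite family of real branches refining a neighbourhood of $V$ — of the order of $w_V$ of them, indexed by the sheets of the associated branched cover — and computes for each one its contribution to the logarithmic area, hence to the quantum index $k$ via Theorem \ref{quantumindex}, together with its contribution to $\text{Rot}_\text{Log}$, hence to the sign $\sigma$. The expected outcome is that the local generating series $\sum_V \sigma_V\,q^{k_V}$ attached to $V$ equals $q^{w_V/2}-q^{-w_V/2}$ up to sign and up to a monomial in $q^{\pm 1/2}$. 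The monomial ambiguities, which arise from the choices needed to split the global quantum index into local pieces, must then be shown to cancel once one sums over all real lifts of $\Gamma$, using that the total quantum index is pinned by the reciprocity constraint on the $m$th boundary point. The unbounded edges require a separate but easier discussion: because the marked points sit on the toric divisors, the ends of $\Gamma$ are rigid and enter only through their weights.

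Multiplying the $m-2$ vertex contributions gives, for each tropical solution $\Gamma$,
$$\sum_{\overrightarrow{C}\ \text{refining}\ \Gamma}\sigma(\overrightarrow{C})\,q^{k(\overrightarrow{C})}=\pm\prod_{V\in\Gamma}\left(q^{w_V/2}-q^{-w_V/2}\right)=\pm\,(q^{1/2}-q^{-1/2})^{m-2}\prod_{V\in\Gamma}\frac{q^{w_V/2}-q^{-w_V/2}}{q^{1/2}-q^{-1/2}},$$
and the last product is the Block--G\"ottsche multiplicity $\mu^{\text{BG}}(\Gamma)$. Summing over all tropical solutions $\Gamma$, dividing by $4$, and reconciling the overall sign and the factor $\tfrac14$ with the conventions in the definitions of $R_{\Delta,k}(\mathcal P)$ and of $N_\Delta^{\partial,\text{trop}}$ — using that every curve occurs with both of its complex orientations and that each geometric curve is paired in the solution set with its antipode $-C$, which shares its quantum index and sign — yields $R_\Delta(\mathcal P)=(q^{1/2}-q^{-1/2})^{m-2}N_\Delta^{\partial,\text{trop}}$. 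By Mikhalkin's invariance theorem the left-hand side does not depend on $\mathcal P$, so the identity holds for every generic configuration.

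The crux is the second step: the explicit local computation of the quantum index and of the sign $\sigma$ for the real branches of a curve over a trivalent tropical vertex of arbitrary weight, and the verification that the per-vertex monomial ambiguities in $q^{\pm1/2}$ assemble into a single global shift killed by the constraint on the total quantum index. This is exactly where Mikhalkin's normal forms near a vertex, combined with careful bookkeeping of the orientation induced by the chosen component $S$, have to be deployed. Everything else — invoking the correspondence theorem, the tree combinatorics producing $m-2$ vertices, and recognizing $\mu^{\text{BG}}(\Gamma)$ in the product — is comparatively formal.
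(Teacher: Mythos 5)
The paper you are reading does not itself prove this statement: it is quoted as a theorem of Mikhalkin with a citation to \cite{mikhalkin2017quantum}, and the present paper treats it as an input (its own contribution is the recursive formula for $N_\Delta^{\partial,\text{trop}}$). So there is no in-paper proof to compare against.

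That said, your outline correctly reproduces the architecture of Mikhalkin's argument in the cited reference: one degenerates a symmetric boundary configuration so that the $\text{Log}$-images escape along the rays of $\Sigma_\Delta$ and appeals to a boundary version of the correspondence theorem; each tropical solution is a trivalent tree with $m-2$ vertices; at each vertex $V$ of lattice weight $w_V$ one analyses the finite family of real refinements via normal forms for the square map and the logarithmic Gauss map, obtaining a local generating series in $q$; and the product of these local series, summed over tropical solutions and normalized by the factor $\tfrac14$ (two orientations times the antipodal pair $C \leftrightarrow -C$), yields $(q^{1/2}-q^{-1/2})^{m-2}\prod_V[w_V]_q$. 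Your identification of $\prod_V(q^{w_V/2}-q^{-w_V/2})$ with $(q^{1/2}-q^{-1/2})^{m-2}\mu^{\text{BG}}(\Gamma)$ via the count of $m-2$ vertices is exactly the book-keeping that turns the per-vertex contributions into the Block--G\"ottsche multiplicity.

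What you have, however, is a plan and not a proof, and you say so yourself. The step you flag as the crux --- the explicit enumeration of the real lifts over a weight-$w_V$ trivalent vertex, the computation of their quantum indices and rotation-number contributions, and the demonstration that the per-vertex $q^{\pm 1/2}$-monomial ambiguities cancel globally once the Menelaus/Weil constraint on the $m$th boundary point is imposed --- is the entire technical content of the theorem, and nothing in your writeup pins it down. Two points in particular would need to be made precise rather than gestured at: (i) the precise relation between the local rotation-number increments and the sign $\sigma(\overrightarrow{C})=(-1)^{(m-\text{Rot}_{\text{Log}}(\overrightarrow{C}))/2}$, including why the parity works out uniformly over all lifts of a fixed tropical $\Gamma$; and (ii) the mechanism by which the "global shift" in the quantum index is killed, since splitting a global $\text{Log}$-area into vertex-local pieces is not canonical and the cancellation is combinatorially nontrivial. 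As a reconstruction of the strategy the proposal is sound and reflects the actual route taken in \cite{mikhalkin2017quantum}; as a proof it is incomplete at exactly the place you identify.
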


\begin{rem}
The normalization $(q^{1/2}-q^{-1/2})^{m-2}$ amounts to clear the denominators of the Block-G\"ottsche multiplicities.
\end{rem}

The results of Mikhalkin reduce the computation of the invariants $R_\Delta$ to a tropical one for the invariants $N_\Delta^{\partial,\text{trop}}$. In this paper we prove a recursive formula for the polynomials $N_\Delta^{\partial,\text{trop}}$, mimicking the Caporaso-Harris formula. This formula allows one to compute them.\\

The paper is organized as follows. In the second section, we recall the standard definitions related to tropical curves. In the third section, we describe the tropical enumerative problem that defines $N_\Delta^{\partial,\text{trop}}$, we prove its invariance and state the recursive formula. The fourth section is devoted to the proof of the formula. In the last section, we provide some examples of computations using the formula.\\

\textit{Acknowledgements} The author is grateful to Ilia Itenberg for numerous discussions leading to the writing of this paper, and to Maxence Blomme for helping to implement the algorithm.\\

\thanks{
The author is partially supported by the ANR
grant ANR-18-CE40-0009 ENUMGEOM}

\section{Tropical Curves}

	\subsection{Abstract and parametrized tropical curves}

Let $\overline{\Gamma}$ be a finite connected graph without bivalent vertices. Let $\overline{\Gamma}_\infty^0$ be the set of $1$-valent vertices of $\overline{\Gamma}$. If $m$ denotes the cardinal of $\overline{\Gamma}_\infty^0$, its elements are labeled with integers from $[\![1;m]\!]$. Let $\Gamma=\overline{\Gamma}\backslash\overline{\Gamma}^0_\infty$. We denote by $\Gamma^0$ the set of vertices of $\Gamma$, and by $\Gamma^1$ the set of edges of $\Gamma$. The non-compact edges resulting from the eviction of $1$-valent vertices are called \textit{unbounded ends}. The set of unbounded ends is denoted by $\Gamma^1_\infty$. Let $l:\Gamma^1\backslash\Gamma^1_\infty\rightarrow\RR_{\geqslant 0}$ be a function, called length function. It endows $\Gamma$ with the structure of a metric graph in the following way : the bounded edges $E$ are isometric to $[0;l(E)]$, and the unbounded ends have infinite length and are isometric to $[0;+\infty[$.

\begin{defi}
Such a metric graph $\Gamma$ is called an \textit{abstract tropical curve}.
\end{defi}

An isomorphism between two abstract tropical curves $\Gamma$ and $\Gamma'$ is an isometry $\Gamma\rightarrow\Gamma'$. In particular an automorphism does not necessarily preserve the labeling. We now define parametrized tropical curves taking values in $N_\RR=N\otimes\RR$.

\begin{defi}
A \textit{parametrized tropical curve} in $N_\RR\simeq\RR^2$ is a pair $(\Gamma,h)$, where $\Gamma$ is an abstract tropical curve and $h:\Gamma\rightarrow\RR^2$ is a map satisfying the following requirements:
\begin{itemize}
\item For every edge $E\in\Gamma^1$, the map $h|_E$ is affine. If we choose an orientation of $E$, the value of the differential of $h$ taken at any interior point of $E$, evaluated on a tangent vector of unit length, is called the slope of $h$ alongside $E$. This slope must lie in $N$.
\item We have the so-called \textit{balancing condition} : at each vertex $V\in\Gamma^0$, if $E$ is an edge containing $V$, let $u_E$ be the slope of $h$ along $E$ when $E$ is oriented outside $V$; then
$$\sum_{E:\partial E\ni V} u_E=0\in N.$$
\end{itemize}
\end{defi}

Two parametrized curves $h:\Gamma\rightarrow N_\RR$ and $h':\Gamma'\rightarrow N_\RR$ are called isomorphic if there exists an isomorphism of abstract tropical curves $\varphi:\Gamma\rightarrow\Gamma'$ such that $h=h'\circ\varphi$.

\begin{rem}
We could assume that $M=N=\ZZ^2$, but the distinction is now useful since the lattice $M$ is a set of functions on the space $N_\RR$ where the tropical curves live, while $N$ is the space of the slopes of the edges of a tropical curve. Moreover, notice that we deal with tropical curves in the affine space $N_\RR$, identified with its tangent space at $0$.
\end{rem}

If $e\in\Gamma^1_\infty$ is an unbounded end of $\Gamma$, let $n_e\in N$ be the slope of $h$ alongside $e$, oriented out of its unique adjacent vertex, \textit{i.e.} toward infinity. The multiset
$$\Delta=\{n_e\in N|e\in\Gamma^1_\infty \}\subset N,$$
is called the degree of the curve. It is a multiset since an element may appear several times. Using the balancing condition, one can show that $\sum_{n_e\in\Delta}n_e=0$. We say that a parametrized curve is of degree $d$ if $\Delta=\Delta_d=\{-e_1,\dots,-e_1,,-e_2,\dots,-e_2,e_1+e_2,\dots,e_1+e_2\}$ where each vector appears $d$ times.\\

\begin{rem}
The degree $\Delta$ leads to the convex lattice polygon $P_\Delta\subset M$ as defined in the introduction : the vectors $\omega(n_e,-)$, with $\omega$ the chosen volume form on $N$, are vectors in $M$, and there is a unique convex polygon obtained by putting them on top of one another. This point of view just amounts to work with an equation of the curve rather than a parametrization. In fact, this polygon would then be the Newton polygon of a polynomial equation defining the curve. However, the knowledge of the polygon $P_\Delta$ does not allow us to recover uniquely the family $\Delta$. Nevertheless, it is the case if we assume the vectors of $\Delta$ to be primitive, \textit{i.e.} their lattice length is $1$.
\end{rem}

\begin{defi}
\begin{itemize}
\item[-] Let $\Gamma$ be an abstract tropical curve. The \textit{genus} of $\Gamma$ is its first Betti number $b_1(\Gamma)$.
\item[-] A curve is \textit{rational} if it is of genus $0$.
\item[-] A parametrized tropical curve $(\Gamma,h)$ is \textit{rational} if $\Gamma$ is rational.
\end{itemize}
\end{defi}

An abstract tropical curve is then rational if it is a tree.

	\subsection{Moment of an edge}
	
	Let $h:\Gamma\rightarrow N_\RR$ be a parametrized tropical curve. Let $e\in\Gamma^1_\infty$ be an unbounded end oriented toward infinity, directed by $n_e$. Using the identification between $N_\RR$ and its tangent space, we define the moment of $e$ as the scalar
	$$\mu_e=\omega(n_e,p)\in\RR,$$
	where $p\in e$ is any point on the edge $e$. We similarly define the moment of a bounded edge if we specify its orientation. The moment of a bounded edge is reversed when its orientation is reversed.\\
	
	Intuitively, the moment of an unbounded end is just a way of measuring its position alongside a transversal axis. Therefore, fixing the moment of an unbounded end amounts to impose on the curve to pass through some point at infinity, or equivalently one of its unbounded ends to be contained in a fixed line with the same slope. In a way, this allows us to do toric geometry in a compactification of $\RR^2$ but staying in $\RR^2$. It provides a coordinate on the components of the toric boundary without even having to introduce the concept of toric boundary in the tropical world. Following this observation, the moment has also a definition in complex toric geometry, where it corresponds to the coordinate of the intersection point of the curve with the toric divisor. Let

$$\begin{array}{rccl}
\varphi: & \CC P^1 & \dashrightarrow & \text{Hom}(M,\CC^*)=N\otimes\CC^* \\
 & t & \mapsto & \chi\prod_{1}^m(t-\alpha_i)^{n_i}. \\
 \end{array}$$
	 be a parametrized rational curve. Given dual basis $(e_1^*,e_2^*)$ of $M$, and $(e_1,e_2)$ of $N$, the parametrized curve given in coordinates is as follows. Let $n_i=a_ie_1+b_ie_2$, $a=\chi(e_1^*)$ and $b=\chi(e_2^*)$, then
	 $$\varphi(t)=\left( a\prod_1^m(t-\alpha_i)^{a_i},b\prod_1^m(t-\alpha_i)^{b_i}\right)\in (\CC^*)^2.$$
	 The vectors $n_i$ of $\Delta$ define a fan $\Sigma_\Delta\subset N_\RR$  to which is associated a toric surface $\CC\Delta$. The toric divisors $D_j$ are in bijection with the rays of the fan, which are directed by the vectors $n_i$. Moreover, the map $\varphi$ extends to the points $\alpha_i$ by sending $\alpha_i$ to a point on the toric divisor $D$ corresponding to the ray directed by $n_i$. A coordinate on $D$ is a primitive monomial $\chi^m\in M$ in the lattice of characters such that $\langle m,n_i\rangle=0$. This latter condition ensures that the monomial $\chi^m$ extends on the divisor $D$. If $n_i$ is primitive, $\iota_{n_i}\omega=\omega(n_i,-)\in M$ is such a monomial, and then the complex moment is the evaluation of the monomial at the corresponding point on the divisor:
	$$\mu_i=\big(\varphi^*\chi^{\iota_{n_i}\omega}\big)(\alpha_i).$$
	
	The Weil reciprocity law gives us the following relation between the moments :
$$\prod_{i=1}^m \mu_i=(-1)^m.$$	
	We could also prove the relation using Vi\`ete's formula. In the tropical world we have an analog called the tropical Menelaus theorem, which gives a relation between the moments of the unbounded ends of a parametrized tropical curve.
	
	\begin{prop}[Tropical Menelaus Theorem]\cite{mikhalkin2017quantum} For a parametrized tropical curve $\Gamma$ of degree $\Delta$, we have
	$$\sum_{n_e\in\Delta} \mu_e =0.$$
	\end{prop}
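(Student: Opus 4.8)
The plan is to prove the identity $\sum_{n_e\in\Delta}\mu_e=0$ by a discrete version of Stokes's theorem on the tree $\Gamma$: the moment function behaves additively across vertices because of the balancing condition, and summing all contributions telescopes to zero since $\Gamma$ is a tree with no cycles. Concretely, I would first extend the definition of the moment to \emph{every} oriented edge, bounded or unbounded: for a bounded edge $E$ with chosen orientation and slope $u_E\in N$, set $\mu_E=\omega(u_E,p)$ for $p\in E$ any point; this is well defined because along $E$ the image $h|_E$ moves in the direction $u_E$, so $\omega(u_E,p)$ is constant on $E$ (the variation is $\omega(u_E,u_E)=0$). Reversing the orientation of $E$ replaces $u_E$ by $-u_E$ and hence flips the sign of $\mu_E$, exactly as stated in the text.

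The key computation is a \emph{local balancing identity at each vertex}. Fix a vertex $V\in\Gamma^0$, and for every edge $E$ incident to $V$ let $u_E$ be its slope oriented \emph{away from} $V$, and let $p_V=h(V)\in N_\RR$. Then the moment of $E$ computed with this outward orientation, using the point $p_V\in \overline{E}$, equals $\omega(u_E,p_V)$. Summing over all edges at $V$ and using the balancing condition $\sum_{E:\partial E\ni V}u_E=0$ gives
$$\sum_{E:\partial E\ni V}\omega(u_E,p_V)=\omega\Bigl(\sum_{E:\partial E\ni V}u_E,\ p_V\Bigr)=\omega(0,p_V)=0.$$
So for each vertex, the sum of the outward moments of the incident edges vanishes.

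Now I would sum this vertex identity over all $V\in\Gamma^0$. Every bounded edge $E$ has two endpoints $V_1,V_2$, and it contributes its moment oriented away from $V_1$ at the vertex $V_1$ and oriented away from $V_2$ (i.e. the opposite orientation) at $V_2$; these two contributions are negatives of each other and cancel. Here one must check that the moment is genuinely orientation-independent of the auxiliary point chosen — which is the constancy observed above — so that the cancellation is exact regardless of whether we evaluate at $V_1$, at $V_2$, or anywhere on $E$. Each unbounded end $e\in\Gamma^1_\infty$ has a single adjacent vertex, so it contributes exactly once, with its slope oriented toward infinity, which is precisely the $n_e$ appearing in the definition of $\mu_e$. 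After all cancellations the total sum reduces to $\sum_{e\in\Gamma^1_\infty}\mu_e=\sum_{n_e\in\Delta}\mu_e$, and since it equals $\sum_V 0=0$, we are done. (Connectedness of $\Gamma$ is not even needed; only the balancing condition and the two-endpoint structure of bounded edges are used.)

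The only subtle point — the "main obstacle," though it is mild — is making the bookkeeping of orientations and base-points airtight: one must phrase the moment so that "the moment of $E$ at the vertex $V$" is unambiguously $\omega(u_E,h(V))$ with $u_E$ pointing out of $V$, verify once and for all that this does not depend on which interior point of $E$ one would have used instead of $h(V)$, and then confirm that the two vertex-contributions of a bounded edge really are $\omega(u_E,h(V_1))$ and $\omega(-u_E,h(V_2))$ whose sum is $\omega(u_E,h(V_1)-h(V_2))=\omega(u_E,-l(E)u_E/|u_E|\cdot|u_E|)=0$. Once this is set up cleanly, the proof is a one-line telescoping argument.
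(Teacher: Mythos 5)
Your proof is correct. Note that the paper does not actually supply a proof of this proposition---it is stated with a citation to Mikhalkin's paper \cite{mikhalkin2017quantum} and used as a black box---so there is no ``paper's own proof'' to compare against. Your argument is the standard one: extend the moment $\mu_E=\omega(u_E,p)$ to oriented bounded edges (well-defined since $\omega(u_E,\cdot)$ is constant along $E$), observe that at each vertex $V$ the balancing condition gives $\sum_{E\ni V}\omega(u_E,h(V))=\omega(0,h(V))=0$ for outward-oriented slopes, then sum over all vertices: the two contributions of each bounded edge cancel because the moment is orientation-odd and base-point-independent, and each unbounded end contributes its $\mu_e$ exactly once. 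This is precisely the proof Mikhalkin gives, and it is airtight as you have written it; the one stylistic wrinkle is the expression $-l(E)u_E/|u_E|\cdot|u_E|$ near the end, which is just $-l(E)u_E$ and could be stated directly, but the conclusion $\omega(u_E,h(V_1)-h(V_2))=0$ is of course correct since $h(V_1)-h(V_2)$ is parallel to $u_E$.
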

	
	In the tropical case (resp. in the complex case), a configuration of $m$ points on the toric divisors is said to satisfy the \textit{Menelaus condition} $\sum\mu_e=0$ (resp. $\prod\mu_e=(-1)^m$.).
	
	\subsection{Moduli space of tropical curves and refined multiplicity of a simple tropical curve}
	
	Let $(\Gamma,h)$ be a parametrized tropical curve such that $\Gamma$ is trivalent, and has no \textit{flat vertex}. A flat vertex is a vertex whose outgoing edges have their slope contained in a common line. If the vertex is trivalent, it just means that for any two outgoing edges of respective slopes $u,v$ among the three, we have $\omega(u,v)=0$. In particular, when the curve is trivalent, no edge can have a zero slope since it would imply that its extremities are flat vertices. Such a curve is called a \textit{simple nodal rational tropical curve}.\\
	
	The construction and vocabulary that follow are not strictly necessary for the understanding of the enumerative problem but are useful for the proof of the recursive formula.
	
	\begin{defi}
	The \textit{combinatorial type} of a tropical curve is the homeomorphism type of its underlying labeled graph $\Gamma$, \textit{i.e.} the labeled graph $\Gamma$ without the metric.
	\end{defi}
	
	To give a graph a tropical structure, one just needs to specify the lengths of the bounded edges. If the curve is trivalent and has $m$ unbounded ends, there are $m-3$ bounded edges, otherwise the number of bounded edges is $m-3-\text{ov}(\Gamma)$, where $\text{ov}(\Gamma)$ is the overvalence of the graph. The overvalence is given by $\sum_V \text{val}(v)-3$. Therefore, the set of curves having the same combinatorial type is homeomorphic to $\RR_{\geqslant 0}^{m-3-\text{ov}(\Gamma)}$, and the coordinates are the lengths of the bounded edges. If $\Gamma$ is an abstract tropical curve, we denote by $\text{Comb}(\Gamma)$ the set of curves having the same combinatorial type as $\Gamma$.\\
	
	For a given $\text{Comb}(\Gamma)$, the boundary of $\RR_{\geqslant 0}^{m-3-\text{ov}(\Gamma)}$ corresponds to curves for which the length of an edge is zero, and therefore corresponds to a graph having a different combinatorial type. This graph is obtained by deleting the edge with zero length and merging its extremities. We can thus glue together all the cones of the finitely many combinatorial types and obtain the \textit{moduli space $\mathcal{M}_{0,m}$ of rational tropical curves with $m$ marked points}. It is a simplicial fan of pure dimension $m-3$, and the top-dimensional cones correspond to trivalent curves. The combinatorial types of codimension $1$ are called \textit{walls}.\\
	
	Given a tropical curve $\Gamma$, if we specify the slope of every unbounded end, and the position of a vertex, we can define uniquely a parametrized tropical curve $h:\Gamma\rightarrow N_\RR$. Therefore, if $\Delta\subset N$ denotes the set of slopes of the unbounded ends, the \textit{moduli space $\mathcal{M}_0(\Delta,N_\RR)$ of parametrized rational tropical curves of degree $\Delta$} is isomorphic to $\mathcal{M}_{0,m}\times N_\RR$ as a fan, where the $N_\RR$ factor corresponds to the position of the finite vertex adjacent to the first unbounded end.\\
	
	On this moduli space, we have a well-defined evaluation map that associates to each parametrized curve the family of moments of its unbounded ends :
	$$\begin{array}{crcl}
	\text{ev} : & \mathcal{M}_0(\Delta,N_\RR) & \longrightarrow & \RR^{m-1} \\
	 & (\Gamma,h) & \longmapsto & \mu=(\mu_i)_{2\leqslant i\leqslant m}
	\end{array}.$$
	By the tropical Menelaus theorem, the moment $\mu_1$ is equal to the opposite of the sum of the other moments, hence we do not take it into account in the map. Notice that the evaluation map is linear on every cone of $\mathcal{M}_0(\Delta,N_\RR)$. Moreover, both spaces have the same dimension $m-1$. Thus, if $\Gamma$ is a trivalent curve, the restriction of $\text{ev}$ on $\text{Comb}(\Gamma)\times N_\RR$ has a determinant well-defined up to sign when $\RR^{m-1}$ and $\text{Comb}(\Gamma)\subset\mathcal{M}_{0,m}$ are both endowed with their canonical basis, and $N_\RR$ is endowed with a basis of $N$. The absolute value $m_\Gamma^\CC$ is called the \textit{complex multiplicity} of the curve, well-known to factor into the following product over the vertices of $\Gamma$:
	$$m_\Gamma^\CC=\prod_V m_V^\CC,$$
	where $m_V^\CC=|\omega(u,v)|$ if $u$ and $v$ are the slopes of two outgoing edges of $V$. The balancing condition ensures that $m_V^\CC$ does not depend on the chosen edges. This multiplicity is the one that appears in the correspondence theorem of Mikhalkin \cite{mikhalkin2005enumerative}. Notice that the simple parametrized tropical curves are precisely the point of the cones with trivalent graph and non-zero multiplicity. We finally recall the definition of the refined Block-G\"ottsche multiplicity.
	
	\begin{defi}
	The refined multiplicity of a simple nodal tropical curve is
	$$m^q_\Gamma=\prod_V [m_V^\CC]_q,$$
	where $[a]_q=\frac{q^{a/2}-q^{-a/2}}{q^{1/2}-q^{-1/2}}$ is the $q$-analog of $a$.
	\end{defi}
	
	This refined multiplicity is sometimes called the Block-G\"ottsche multiplicity and intervenes in the definition of the invariant $N_\Delta^{\partial,\text{trop}}$. Notice that the multiplicity is the same for every curve inside a given combinatorial type.

\section{Definition of the invariants and recursive formula}

	\subsection{Tropical enumerative problem}

We now turn our focus into the tropical enumerative problem that provides the refined tropical invariants $N_\Delta^{\partial,\text{trop}}$ used in \cite{mikhalkin2017quantum}. This family of enumerative problems depends on the choice of the degree $\Delta$, and the recursive formula proven in the present paper gives a relation between all these invariants.\\
	
Let $\Delta=\{v_1,\dots,v_{m}\}\subset N$ be a tropical degree. We do not assume the vectors $v_i$ to be primitive since the recursive formula almost always makes appear degrees with non-primitive vectors. For each $v_i\in\Delta$ we choose some scalar $\mu_i\in\RR$. Because of the tropical Menelaus theorem, the sum of the moments of all the unbounded ends of a tropical curve is zero, so a necessary condition for the scalars $\mu_i$ to be the moments of some tropical curve is $\sum_1^{m}\mu_i=0$. We now look at the set $\mathcal{S}(\mu)$ of parametrized rational tropical curves of degree $\Delta$ that have $\mu=(\mu_i)$ as family of moments, \textit{i.e.} $\text{ev}(\Gamma)=\mu$. The invariance amounts to prove that the count of parametrized rational curves with appropriate multiplicity does not depend on $\mu$. This tropical enumerative problem is called the $\Delta$-problem.\\

Notice that due to the linear character of the evaluation map restricted to any combinatorial type, each type contributes at most one solution unless the map is non-injective. Let $\text{Comb}(\Gamma)$ be a top-dimensional combinatorial type, for which the evaluation map is not injective. Since $\text{Comb}(\Gamma)$ and $\RR^{m-1}$ have the same dimension, the evaluation map is not surjective either. Similarly, the restriction of the evaluation map to non top-dimensional combinatorial types fails to be surjective for dimensional reasons. A family of moments $\mu$ is said to be generic if it is chosen outside the image of the evaluation map restricted to non top-dimensional combinatorial types, and top-dimensional types with non-injective evaluation map. Thus, if the configuration of moments $\mu$ is chosen generically, the set of solutions $\mathcal{S}(\mu)$ is finite, and the rational curves of $\mathcal{S}(\mu)$ solution to the problem are simple, and they have a well-defined refined tropical multiplicity $m_\Gamma^q$. Then we set
$$N_{\Delta}^{\partial,\text{trop}}(q,\mu)=\sum_{\Gamma\in\mathcal{S}(\mu)}m_\Gamma^q.$$

\begin{rem}
Let $\text{Comb}(\Gamma)$ be a combinatorial type, and $A$ the linear map associated to the restriction $\text{ev}_{\text{Comb}(\Gamma)}$ of $\text{ev}$ to $\text{Comb}(\Gamma)$. Let $(l,V)$ denotes the coordinates on $\text{Comb}(\Gamma)$ given by the length of the edges and the position of a specified vertex. To see if $\text{Comb}(\Gamma)$ contributes a solution, one just needs to solve the system $A(l,V)=\mu$, leading to a formal solution $(l,V)=A^{-1}(\mu)\in\RR^{m-3}\times N_\RR$, and check that all its first $m-3$ coordinates are non-negative.
\end{rem}

\begin{theo}
The value of $N_{\Delta}^{\partial,\text{trop}}(q,\mu)$ does not depend on $\mu$ as long as $\mu$ is generic.
\end{theo}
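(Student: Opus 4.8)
The plan is to show that $N_{\Delta}^{\partial,\mathrm{trop}}(q,\mu)$ is locally constant on the complement of a discriminant locus in the space $\{\mu\in\RR^m : \sum_i \mu_i = 0\}\cong\RR^{m-1}$ of admissible moment configurations. First I would identify precisely this discriminant: it is the union of the images $\mathrm{ev}(\mathrm{Comb}(\Gamma)\times N_\RR)$ over all non-top-dimensional combinatorial types $\Gamma$, together with the images of the walls (codimension-one types) and the boundaries of the top-dimensional cones. Since $\mathrm{ev}$ is piecewise linear and each such piece has image of dimension $\leq m-2$, the discriminant is a finite polyhedral complex of codimension $1$, and its complement is a disjoint union of open chambers. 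On each chamber the set $\mathcal{S}(\mu)$ of solutions varies ``continuously'': each top-dimensional type either contributes exactly one simple curve or contributes none, and this dichotomy is constant on a chamber because it is governed by the sign conditions $A^{-1}(\mu)\in\RR_{\geqslant 0}^{m-3}\times N_\RR$ from the Remark, whose defining hyperplanes are part of the discriminant. Hence $N_{\Delta}^{\partial,\mathrm{trop}}$ is constant on each chamber, and it remains to prove invariance across a wall.

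Next I would carry out the wall-crossing analysis. Let $\mu$ move along a generic path crossing the discriminant at a single point $\mu_0$ lying in the relative interior of a codimension-one stratum; genericity of the path ensures $\mu_0$ comes from a single wall-type $\Gamma_0$, i.e. a rational tropical curve that is trivalent except for one exceptional feature — either a single four-valent vertex, or a vertex adjacent to a contracted bounded edge of length zero. The top-dimensional combinatorial types whose closures contain $\mathrm{Comb}(\Gamma_0)$ are exactly the resolutions of this degeneracy: a four-valent vertex resolves in (generically) three ways, a length-zero edge in the analogous small collection of ways. Just to the two sides of the wall, different subsets of these resolutions satisfy the positivity constraints and thus contribute actual solutions. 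The core computation is then purely local at the exceptional vertex: I must check that the signed-with-multiplicity contributions of the resolutions appearing on one side sum to those appearing on the other side, i.e.
$$\sum_{\Gamma \text{ on the left}} m_\Gamma^q \;=\; \sum_{\Gamma \text{ on the right}} m_\Gamma^q .$$
Since the multiplicity is a product $m_\Gamma^q=\prod_V [m_V^\CC]_q$ over vertices, and all vertices away from the degeneration are common to all resolutions, this reduces to a small identity among $q$-integers $[\cdot]_q$ attached to the local vertices of the competing resolutions. The four-valent case gives the relation of the shape $[a]_q[b]_q$-type sums matching across the wall — essentially the tropical Pick/determinant bookkeeping refined by the $q$-analog — and the contracted-edge case is a degenerate limit of it.

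The main obstacle, and the heart of the argument, is precisely this local $q$-identity at a four-valent vertex together with the careful verification that the global sign/positivity pattern flips exactly as predicted when $\mu$ crosses the wall. Concretely, one must (i) enumerate the resolutions of the four-valent vertex, compute the lengths of the newly created bounded edge as linear functions of $\mu$, and read off which resolutions have positive edge-length on which side of $\mu_0$; and (ii) verify the refined multiplicities balance. Step (i) is a linear-algebra computation using that $\mathrm{ev}$ is linear on each cone and that the determinant factors through the vertex multiplicities $m_V^\CC=|\omega(u,v)|$; step (ii) is where the Block--Göttsche refinement is essential — the integer ($q=1$ and $q=-1$) versions are the classical Mikhalkin balancing, and one checks the polynomial identity either directly for the two generic local pictures (three-to-one and two-to-two splittings of a four-valent vertex) or by citing the analogous verification in \cite{itenberg2013block}. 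Once this local invariance is established, concatenating across all walls of a path connecting two arbitrary generic configurations yields that $N_{\Delta}^{\partial,\mathrm{trop}}(q,\mu)$ is independent of $\mu$.
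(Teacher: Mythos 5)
Your proposal follows essentially the same route as the paper's proof: choose a generic path between two generic moment configurations, observe local constancy away from a codimension-one discriminant, reduce wall-crossing to the local picture at a single four-valent vertex, determine which of the three resolutions lie on which side of the wall via the sign/positivity of the newly created edge length, and verify that the refined multiplicities balance via a $q$-integer identity (with the degenerate case of parallel edges handled as a limit). The only difference is that you leave the explicit $q$-identities and the sign bookkeeping to be ``cited or worked out,'' whereas the paper computes both in full; but the structure and key ideas coincide.
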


A proof can be found in the next subsection.

	\subsection{Proof of tropical invariance}

	The proof of invariance goes in the same way as many tropical proofs of invariance by showing that we have a local invariance of the count around the walls of the tropical moduli space. 
	
	\begin{proof}
	We choose two generic configurations $\mu(0)$ and $\mu(1)$, and choose a generic path $\mu(t)$ between them. Due to the genericity, we know that the set $F$ of values of $t$ where $\mu(t)$ meets the non-generic configurations is finite, and $N_{\Delta}^{\partial,\text{trop}}(q,\mu(t))$ is constant on the connected components of the complement of this exceptional set $F$. We now need to check that the value is constant around these special values.\\
	
	Let $t^*$ be such a special value. Thanks to the genericity of the path, it means that at least one of the curves of $\mathcal{S}(\mu(t^*))$ has a unique four-valent vertex $V$. There are three ways to deform this curve into a trivalent one by choosing a splitting of the quadrivalent vertex, meaning there are three maximal cones adjacent to the wall. In some cases, one of the deformation leads to a flat vertex, \textit{i.e.} a non-injective combinatorial type. Let $E_1,E_2,E_3,E_4$ be the adjacent edges directed by $a_1,a_2,a_3,a_4$, with ingoing orientations. Their index $i$ is taken in $\ZZ/4\ZZ$. The splittings are denoted by $12//34$, $13//24$ and $14//23$ according to the pairing of vertices. Around a wall, one curve solution may divide in two solutions, or the other way around two solutions may merge into one solution.\\
	
	We first assume that there are no parallel edges among the edges $E_i$. Let us prove that up to a relabeling we can assume :
	\begin{itemize}
	\item[-] for each $i$ we have $\omega(a_i,a_{i+1})>0$,
	\item[-] we have $\omega(a_2,a_3)>\omega(a_1,a_2)$.
\end{itemize}
The first point essentially consists in finding some cyclic counterclockwise order on the outgoing vectors $a_i$. Let us take such a cyclic order and prove that it satisfies these conditions : if we had $\omega(a_4,a_1)<0$ (same for $\omega(a_1,a_2)<0$ and other values), then because of the counterclockwise cyclic order all the vectors $a_i$ are in a half-plane and their sum would not be zero, which is absurd. So we have $\omega(a_i,a_{i+1})>0$. For the second point, the assumption that there are no parallel vectors ensures that $\omega(a_i,a_{i+1}+a_{i-1})\neq 0$ for any $i$, thus there are no consecutive equal values. Hence we can assume that $\omega(a_2,a_3)>\omega(a_1,a_2)$ up to a cyclic shift of the indices.\\

Let us notice that
\begin{align*}
\omega(a_4,a_1) &=\omega(-a_1-a_2-a_3,a_1) \\
	&=\omega(a_1,a_2+a_3)>0,\\
\text{and }	\omega(a_3,a_4) &=\omega(a_3,-a_1-a_2-a_3) \\
	&=\omega(a_1+a_2,a_3)>0.\\
\end{align*}
	
	To prove the local invariance, we need to know the repartition of the combinatorial types around the wall, that is, the adjacent combinatorial types providing a solution when $\mu(t)$ moves slightly. Using the correspondence theorem of Mikhalkin \cite{mikhalkin2005enumerative} or the tropical proof of invariance of the count with complex multiplicities given by A. Gathmann-H. Markwig in \cite{gathmann2007numbers}, this repartition is known to match the equality given between complex multiplicities $m_\Gamma=\prod_Vm_V^\CC>0$. All the vertices in the respective products for the three adjacent combinatorial types are the same, except the two vertices resulting from the splitting of the quadri-valent vertex. The desired relation is then
	$$\begin{array}{ccccccc}
	\omega(a_1,a_2)\omega(a_1+a_2,a_3) & + & \omega(a_1,a_3)\omega(a_2,a_1+a_3) & + & \omega(a_2,a_3)\omega(a_2+a_3,a_1) & = &0, \\
\text{ for }12//34 & & \text{ for }13//24 & & \text{ for }14//23 & & \\
	\end{array}$$
	and the repartition of combinatorial types around the wall is given by the sign of each term. It means up to sign that one is positive and is on one side of the wall, and the two other ones are negative, on the other side of the wall. Hence, we just need to study the signs of each term to know which curve is on which side. We know that $\omega(a_1,a_2)$ and $\omega(a_1+a_2,a_3)$ are positive, therefore their product, which is the term of $12//34$, is also positive. We know that $\omega(a_2,a_3)$ is positive, but $\omega(a_2+a_3,a_1)$ is negative, therefore their product is negative and $14//23$ is on the other side of the wall. It means that the combinatorial types $12//34$ and $14//23$ are on opposite sides of the wall. We need to determine on which side the type $13//24$ is, and that is given by the sign of the middle term. As by assumption $\omega(a_2,a_1+a_3)=\omega(a_2,a_3)-\omega(a_1,a_2)>0$, it is determined by the sign of $\omega(a_1,a_3)$.\\
	\begin{itemize}
	\item If $\omega(a_1,a_3)>0$, then $12//34$ and $13//24$ are on the same side, and the invariance for refined multiplicities is dealt with the identity
	\begin{align*}
 & (q^{\omega(a_2,a_3)}-q^{-\omega(a_2,a_3)})(q^{\omega(a_1,a_2+a_3)}-q^{-\omega(a_1,a_2+a_3)}) \\
 = & (q^{\omega(a_1,a_2)}-q^{-\omega(a_1,a_2)})(q^{\omega(a_1+a_2,a_3)}-q^{-\omega(a_1+a_2,a_3)}) \\
+ & (q^{\omega(a_1,a_3)}-q^{-\omega(a_1,a_3)})(q^{\omega(a_2,a_1+a_3)}-q^{-\omega(a_2,a_1+a_3)}),\\
\end{align*}
	\item and if $\omega(a_1,a_3)<0$, then $14//23$ and $13//24$ are on the same side and then the invariance for refined multiplicities is true since

\begin{align*}
 & (q^{\omega(a_2,a_3)}-q^{-\omega(a_2,a_3)})(q^{\omega(a_1,a_2+a_3)}-q^{-\omega(a_1,a_2+a_3)}) \\
+ & (q^{\omega(a_3,a_1)}-q^{-\omega(a_3,a_1)})(q^{\omega(a_2,a_1+a_3)}-q^{-\omega(a_2,a_1+a_3)})\\
 = & (q^{\omega(a_1,a_2)}-q^{-\omega(a_1,a_2)})(q^{\omega(a_1+a_2,a_3)}-q^{-\omega(a_1+a_2,a_3)}). \\
\end{align*}

	\end{itemize}
	
If we have some edges parallel among the vectors $a_i$, either two consecutives vectors are parallel, and then the invariance is straightforward, since there are only two adjacent combinatorial types with equal non-zero multiplicity, or we can choose a cyclic labeling such that $a_1$ and $a_3$ are parallel. We then have $\omega(a_1,a_3)=0$. It means that one of the determinant multiplicities is zero, which is normal since the associated combinatorial type would have a flat vertex. Thus,
$$\begin{array}{ccccc}
	\omega(a_1,a_2)\omega(a_1+a_2,a_3)  & + & \omega(a_2,a_3)\omega(a_2+a_3,a_1) & = &0. \\
\text{ for }12//34  & & \text{ for }14//23 & & \\
	\end{array}$$
	It means that the two terms are of opposite sign. Assume the first one is positive, and thus $\omega(a_1,a_2)$ and $\omega(a_1+a_2,a_3)=\omega(a_2,a_3)$ have the same sign. The refined multiplicity is then
	$$(q^{\omega(a_1,a_2)}-q^{-\omega(a_1,a_2)})(q^{\omega(a_2,a_3)}-q^{-\omega(a_2,a_3)}).$$
	The second term being negative, it means that $\omega(a_2,a_3)$ and $\omega(a_1,a_2+a_3)=\omega(a_1,a_2)$ have the same sign. The refined multiplicity is the the same and we have the desired local invariance. It is the same if the first term is negative.
\end{proof}

\begin{rem}
The technicalities in the proof are just needed to find the repartition of the combinatorial types around the wall. This repartition could also be found by looking at the subdivisions of the Newton polygon, which are dual to the tropical curves. The quadrilateral dual to the quadrivalent vertex has three subdivisions (resp. two in the case of a flat vertex) matching the three splittings of the vertex: one using the big diagonal, one using the small diagonal, and one by using a parallelogram. Then, we can show that the repartition is given by putting the subdivision using the big diagonal alone on one side of the wall. See \cite{itenberg2013block}.
\end{rem}

	\subsection{Recursive formula}
	
	Before stating the formula we need to introduce some notations. Let $\Delta=\{v_1,v_2,\dots,v_m\}\subset N\simeq\ZZ^2$ be a degree for plane curves, \textit{i.e.} a family of vectors that whose sum is zero. Let $\mu\in\RR^m$ be the family of moments satisfying the tropical Menelaus condition, and let $\Gamma$ be a parametrized tropical curve in $\mathcal{S}(\mu)$. As $\Gamma$ is a tree, there is a unique shortest path between the edges directed by $v_1$ and $v_m$, which we call a \textit{chord}. The chord has a natural orientation from the end $v_1$ to the end $v_m$. Once we remove the chord, the curve $\Gamma$ disconnects into several components $\Gamma_1,\dots,\Gamma_p$, indexed in the order in which they meet the chord. Let $E_i$ be the edge in $\Gamma_i$ adjacent to the chord, and $V_i$ the vertex in which both meet. All these notations along with the ones which are about to follow are depicted on Figture \ref{skeleton}.\\

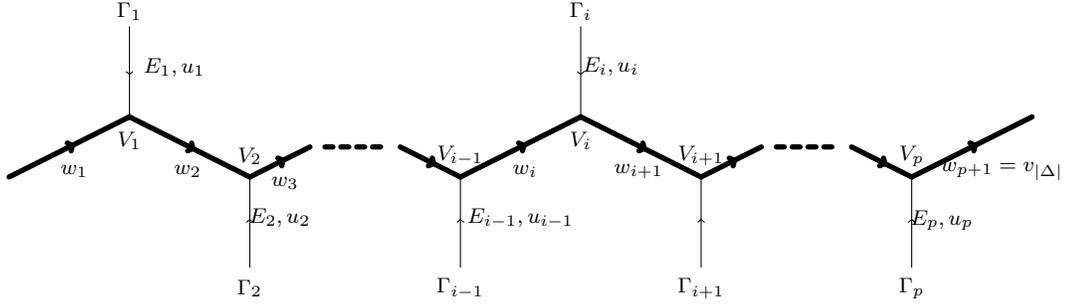
\begin{figure}
\begin{center}
\begin{tikzpicture}[line cap=round,line join=round,>=triangle 45,x=0.4cm,y=0.4cm]

\draw [line width=2.pt] (-1.,2.)-- (3.,4.);
\draw [line width=2.pt] (1.1072776923231789,3.053638846161589) -- (1.064366615393908,2.871266769212186);
\draw [line width=2.pt] (1.1072776923231789,3.053638846161589) -- (0.935633384606092,3.1287332307878137);
\draw [line width=2.pt] (3.,4.)-- (7.,2.);
\draw [line width=2.pt] (5.107277692323179,2.9463611538384105) -- (4.935633384606092,2.871266769212186);
\draw [line width=2.pt] (5.107277692323179,2.9463611538384105) -- (5.064366615393908,3.1287332307878137);
\draw [line width=2.pt] (7.,2.)-- (9.,3.);
\draw [line width=2.pt] (8.10727769232318,2.55363884616159) -- (8.064366615393908,2.3712667692121867);
\draw [line width=2.pt] (8.10727769232318,2.55363884616159) -- (7.935633384606093,2.6287332307878146);
\draw [line width=2.pt] (12.,3.)-- (14.,2.);
\draw [line width=2.pt] (13.10727769232318,2.4463611538384114) -- (12.935633384606092,2.3712667692121867);
\draw [line width=2.pt] (13.10727769232318,2.4463611538384114) -- (13.064366615393908,2.6287332307878146);
\draw [line width=2.pt] (14.,2.)-- (18.,4.);
\draw [line width=2.pt] (16.10727769232318,3.053638846161589) -- (16.064366615393908,2.871266769212186);
\draw [line width=2.pt] (16.10727769232318,3.053638846161589) -- (15.935633384606092,3.1287332307878137);
\draw [line width=2.pt] (18.,4.)-- (22.,2.);
\draw [line width=2.pt] (20.10727769232318,2.9463611538384105) -- (19.935633384606092,2.871266769212186);
\draw [line width=2.pt] (20.10727769232318,2.9463611538384105) -- (20.064366615393908,3.1287332307878137);
\draw [line width=2.pt] (22.,2.)-- (24.,3.);
\draw [line width=2.pt] (23.10727769232318,2.55363884616159) -- (23.064366615393908,2.3712667692121867);
\draw [line width=2.pt] (23.10727769232318,2.55363884616159) -- (22.935633384606092,2.6287332307878146);
\draw [line width=2.pt] (27.,3.)-- (29.,2.);
\draw [line width=2.pt] (28.10727769232318,2.4463611538384114) -- (27.935633384606092,2.3712667692121867);
\draw [line width=2.pt] (28.10727769232318,2.4463611538384114) -- (28.064366615393908,2.6287332307878146);
\draw [line width=2.pt] (29.,2.)-- (33.,4.);
\draw [line width=2.pt] (31.10727769232318,3.053638846161589) -- (31.064366615393908,2.871266769212186);
\draw [line width=2.pt] (31.10727769232318,3.053638846161589) -- (30.935633384606092,3.1287332307878137);
\draw [line width=2.pt,dash pattern=on 3pt off 3pt] (9.5,3.)-- (11.5,3.);
\draw [line width=2.pt,dash pattern=on 3pt off 3pt] (24.5,3.)-- (26.5,3.);
\draw (3.,7.)-- (3.,4.);
\draw (3.,5.416041925623623) -- (2.8920539043732294,5.5);
\draw (3.,5.416041925623623) -- (3.107946095626771,5.5);
\draw (7.,-1.)-- (7.,2.);
\draw (7.,0.5839580743763765) -- (7.107946095626771,0.5);
\draw (7.,0.5839580743763765) -- (6.892053904373229,0.5);
\draw (14.,-1.)-- (14.,2.);
\draw (14.,0.5839580743763765) -- (14.10794609562677,0.5);
\draw (14.,0.5839580743763765) -- (13.89205390437323,0.5);
\draw (18.,7.)-- (18.,4.);
\draw (18.,5.416041925623623) -- (17.89205390437323,5.5);
\draw (18.,5.416041925623623) -- (18.107946095626772,5.5);
\draw (22.,-1.)-- (22.,2.);
\draw (22.,0.5839580743763765) -- (22.107946095626772,0.5);
\draw (22.,0.5839580743763765) -- (21.89205390437323,0.5);
\draw (29.,-1.)-- (29.,2.);
\draw (29.,0.5839580743763765) -- (29.107946095626772,0.5);
\draw (29.,0.5839580743763765) -- (28.89205390437323,0.5);
\begin{scriptsize}
\draw[color=black] (3.,3.2) node {$V_1$};
\draw[color=black] (7.,2.7) node {$V_2$};
\draw[color=black] (18.,3.2) node {$V_i$};
\draw[color=black] (22.,2.7) node {$V_{i+1}$};
\draw[color=black] (14.,2.7) node {$V_{i-1}$};
\draw[color=black] (29,2.7) node {$V_p$};
\draw[color=black] (1.1642969037197943,2.2) node {$w_1$};
\draw[color=black] (4.938412247115037,2.2) node {$w_2$};
\draw[color=black] (8.168799108834694,1.8) node {$w_3$};
\draw[color=black] (16.14881417813226,2.2) node {$w_i$};
\draw[color=black] (19.922929521527504,2.2) node {$w_{i+1}$};

\draw[color=black] (3.,7.5) node {$\Gamma_1$};
\draw[color=black] (7.,-1.7) node {$\Gamma_2$};
\draw[color=black] (14.,-1.7) node {$\Gamma_{i-1}$};
\draw[color=black] (18.,7.5) node {$\Gamma_i$};
\draw[color=black] (22.,-1.7) node {$\Gamma_{i+1}$};
\draw[color=black] (29.,-1.7) node {$\Gamma_p$};
\draw[color=black] (32,2.2) node {$w_{p+1}=v_{|\Delta|}$};
\draw[color=black] (4.5,5.630077747790147) node {$E_1,u_1$};
\draw[color=black] (8,0.6405693277083074) node {$E_2,u_2$};
\draw[color=black] (16,0.6405693277083074) node {$E_{i-1},u_{i-1}$};
\draw[color=black] (19,5.678053790290934) node {$E_i,u_i$};
\draw[color=black] (30,0.5606092568736626) node {$E_p,u_p$};
\end{scriptsize}
\end{tikzpicture}

\caption{skeleton of the curve $\Gamma$ with every notation. The chord is in fat.\label{skeleton}}

\end{center}
\end{figure}

There are two possibilities for $\Gamma_i$ : 
\begin{itemize}
\item[-] either $E_i$ is an unbounded edge, directed by some $v_i\in\Delta$, and we set $u_i=-v_i$,
\item[-] or $E_i$ is bounded and $\Gamma_i$ contains more than one unbounded edge of $\Gamma$. We then denote by $\widetilde{\Delta_i}\subset\Delta$ the set of directing vectors of unbounded ends of $\Gamma$ that belong to $\Gamma_i$. Let $u_i=-\sum_{v\in\widetilde{\Delta_i}}v$ be the directing vector of $E_i$ going toward the chord, and $\Delta_i=\widetilde{\Delta_i}\sqcup\{u_i\}$ be the degree of the curve $\Gamma_i'$ obtained by letting $E_i$ going to infinity instead of stopping when meeting the chord in $V_i$.
\end{itemize}
Finally, let $w_i$ be the vector directing the edge of the chord between $V_{i-1}$ and $V_i$. This means that $w_1=-v_1$ and $w_{i+1}=w_i+u_i$. Let also $\sigma_i=\omega(w_i,w_{i+1})$ be the signed multiplicity of the vertex $V_i$. We now can derive a recursive formula from this description.

	\begin{theo}
	With the above notation, we have
	$$N_\Delta^{\partial,\text{trop}}(q)=\sum_\ast \prod_{i=1}^p [\sigma_i]_q N_{\Delta_i}^{\partial,\text{trop}}(q),$$
	where the sum $\ast$ is over the ordered partitions of $\Delta-\{v_1,v_m\}$ into
	$$\Delta-\{v_1,v_m\}=\bigsqcup_{i=1}^p \widetilde{\Delta_i} , \ 
	u_{i}=-\sum_{v\in\widetilde{\Delta_i}}v, $$
	 such that
$$\left\{ \begin{array}{l}
\sigma_i>0\Rightarrow |\widetilde{\Delta_i}|=1 \\
\omega(\sigma_i u_i,\sigma_{i+1} u_{i+1})\geqslant 0, \\
\end{array}\right.$$
and up to a reordering of consecutive indices $i$ having respective colinear vectors $u_i$.
	\end{theo}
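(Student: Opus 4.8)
The plan is to evaluate the configuration-independent number $N_\Delta^{\partial,\text{trop}}(q)$ on a single, very degenerate choice of moments, in the spirit of floor-decomposition arguments. By the invariance theorem of the previous subsection it suffices to compute $N_\Delta^{\partial,\text{trop}}(q,\mu)$ for one generic $\mu$, and I would take it ``stretched'': fix an auxiliary linear order on the $m-2$ ends of $\Delta$ other than $v_1$ and $v_m$, let the absolute values of their moments grow by an enormous factor at each step, and let $\mu_{v_1}$ (hence $\mu_{v_m}$, via the tropical Menelaus theorem) sit at the largest scale, all auxiliary choices being generic. The point of such a configuration is that every curve $\Gamma\in\mathcal{S}(\mu)$ is forced to be maximally elongated along its chord from the $v_1$-end to the $v_m$-end, so that the decomposition of Figure~\ref{skeleton} becomes rigid at every scale.

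First I would set up the correspondence between curves and the data indexing the sum $\ast$. For \emph{any} $\mu$, removing the chord of $\Gamma\in\mathcal{S}(\mu)$ splits the tree into subcurves $\Gamma_1,\dots,\Gamma_p$ attached at consecutive vertices $V_1,\dots,V_p$, producing an ordered partition $\Delta\setminus\{v_1,v_m\}=\bigsqcup_i\widetilde{\Delta_i}$ and, by letting $E_i$ run to infinity, a curve $\Gamma_i'$ of degree $\Delta_i=\widetilde{\Delta_i}\sqcup\{u_i\}$. Since moments are additive at a vertex when taken with outgoing orientations, a short computation gives that the moment of the chord edge $w_i$ equals $m_i=-\mu_{v_1}-\sum_{j<i}\sum_{v\in\widetilde{\Delta_j}}\mu_v$, that the induced moment of the $u_i$-end of $\Gamma_i'$ is $m_{i+1}-m_i=-\sum_{v\in\widetilde{\Delta_i}}\mu_v$, and hence that $\Gamma_i'$ is automatically a solution of the $\Delta_i$-problem for these (still generic) induced moments. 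Conversely, from an ordered partition together with a choice of solution $\Gamma_i'$ of each $\Delta_i$-problem one recovers $w_1=-v_1$, the $w_{i+1}=w_i+u_i$, the $\sigma_i=\omega(w_i,w_{i+1})$ and the scalars $m_i$, hence the vertices $V_i$ as the intersections of the moment lines $\{\omega(w_i,\cdot)=m_i\}$ and $\{\omega(w_{i+1},\cdot)=m_{i+1}\}$; the $u_i$-end of $\Gamma_i'$ then passes through $V_i$ automatically, since its moment equals $m_{i+1}-m_i$, which is the value of $\omega(u_i,\cdot)$ at $V_i$, so truncating it at $V_i$ and gluing to the chord reassembles a well-defined simple rational tropical curve of degree $\Delta$. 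Using the hierarchy of scales I would argue that, for the stretched $\mu$, each $\Gamma_i'$ is confined near its own scale, the reassembly is deterministic, and decomposition and reassembly are mutually inverse; so $\mathcal{S}(\mu)$ is in bijection with the set of such data for which the gluing can be carried out with non-negative edge lengths.

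The heart of the argument — and the step I expect to be the real obstacle — is to show that, for the stretched $\mu$, this realizability holds \emph{exactly} when the ordered partition satisfies the conditions of $\ast$. One has $V_i\approx-\tfrac{\mu_{v_1}}{\sigma_i}u_i$ to leading order, so whether the bounded part of $E_i$ can be drawn on the correct side of $V_i$ depends on the sign of $\sigma_i$: a ``backwards'' turn of the chord at $V_i$ ($\sigma_i>0$) leaves room, at that scale, only for a single unbounded end, which is the condition $\sigma_i>0\Rightarrow|\widetilde{\Delta_i}|=1$; and the non-negativity of the bounded chord edges $w_2,\dots,w_p$, read off from the sub-leading hierarchical corrections to the $V_i$, becomes the convexity condition $\omega(\sigma_iu_i,\sigma_{i+1}u_{i+1})\ge 0$, the remaining indeterminacy ``up to reordering consecutive $i$ with collinear $u_i$'' recording that such attachments may be threaded along the chord in either order without changing the curve. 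Since a curve in $\mathcal{S}(\mu)$ with a given decomposition exists if and only if that decomposition is realizable, this simultaneously shows that the decomposition of every actual solution satisfies $\ast$. Dually this is the statement that the chord corresponds to a convex lattice path in a subdivision of $P_\Delta$ cutting off the sub-polygons $P_{\Delta_i}$, as in the remark after the invariance proof, and I would cross-check the analysis against that picture.

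It remains to compute multiplicities. As $\Gamma$ is simple, $m_\Gamma^q=\prod_V[m_V^\CC]_q$, and the vertices of $\Gamma$ are $V_1,\dots,V_p$ together with the vertices of the $\Gamma_i'$; the vertex $V_i$ has outgoing slopes $-w_i,w_{i+1},-u_i$, so $m_{V_i}^\CC=|\omega(w_i,w_{i+1})|$ contributes the factor $[\sigma_i]_q$ (the signs being exactly those permitted by $\ast$), while the remaining vertices contribute $\prod_i m_{\Gamma_i'}^q$. Summing over $\mathcal{S}(\mu)$ and using the bijection to replace, for each $\ast$-partition, the sum over the tuples $(\Gamma_i')_i$ of $\prod_i m_{\Gamma_i'}^q$ by $\prod_i N_{\Delta_i}^{\partial,\text{trop}}(q)$, gives $N_\Delta^{\partial,\text{trop}}(q)=\sum_\ast\prod_{i=1}^p[\sigma_i]_q N_{\Delta_i}^{\partial,\text{trop}}(q)$.
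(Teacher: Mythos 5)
Your proposal is essentially the same approach as the paper's: degenerate the point configuration so that the moments $\mu_{v_1}$ and $\mu_{v_m}$ go off to infinity, decompose each solution curve along its chord from the $v_1$-end to the $v_m$-end, identify the realizability conditions on the decomposition, and factor the multiplicity over the chord vertices and the subcurves. Your ``leading order'' computation $V_i\approx-\tfrac{\mu_{v_1}}{\sigma_i}u_i$ corresponds to the paper's equation $-1=\sigma_i\,\dd l_i/\dd t$, your ``sub-leading corrections'' argument for the chord edges corresponds to $\sigma_{i-1}\sigma_i\,\dd\lambda_i/\dd t=\omega(u_{i-1},u_i)$, and your multiplicity factorization is exactly $m_\Gamma^q=\prod_i[\sigma_i]_q\,m_{\Gamma_i}^q$.

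The only substantive differences are in presentation and in the choice of degenerate $\mu$. The paper uses a single one-parameter deformation $\mu+t\delta$ with $\delta=(0,\dots,0,-1)$ and the clean notion of ``surviving combinatorial types'': only the moments of the two distinguished ends move, so each $\Gamma_i'$ is literally frozen (its unbounded-end moments are constant), $E_i$ slides along a fixed line, and the entire analysis reduces to derivatives of the chord-edge lengths. You instead propose hierarchically stretching \emph{all} the auxiliary moments as well; this is unnecessary and somewhat muddies the picture, since the $\Gamma_i'$ would then themselves be forced to degenerate further rather than sitting at a fixed generic configuration for the $\Delta_i$-problem. It does not break the argument (the result is configuration-independent anyway), but the paper's choice is the more economical one, and some of your heuristic claims -- notably that ``each $\Gamma_i'$ is confined near its own scale'' -- don't actually hold in the paper's picture, where all the $\Gamma_i'$ remain at bounded scale while only the chord runs off to infinity. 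Your reading of the ``collinear $u_i$'' caveat (that reorderings ``thread in either order without changing the curve'') is also a shade imprecise: the paper's point is that among the combinatorial types differing by such a reordering, exactly one has non-negative chord-edge lengths, namely the one matching the order of the moments $\mu_{E_i}$ along a transversal line. The effect on the count is the same, but the mechanism is ``only one is realizable,'' not ``both are the same curve.''
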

	
\begin{rem}
The term \textit{"ordered partition"} means that the set is subdivided into several subsets, but we keep track of their order by labeling them. Ordered partitions in $p$ subsets are thus in bijection with the surjections to $[\![1;p]\!]$. The reordering means that orders that differ by a sequence of permutations of consecutive indices $i$ and $i+1$ such that $u_i$ and $u_{i+1}$ are colinear, are counted only once.
\end{rem}


\section{Proof of the recursive formula}

To prove the recursive formula, we find a way to describe the curves solution to the problem for a specific value of $\mu$. The idea is to choose an idealistic configuration of constraints and then a $1$-parameter family $\mu(t)$ of moments getting closer and closer to this idealistic but unreachable configuration. Such a $1$-parameter family $\mu(t)$ is called a deformation. Then, we describe the specific combinatorial types that continue to provide a solution through the deformation process toward this ideal configuration.\\

\begin{rem}
The same idea drives the tropical proof of the Caporaso-Harris formula in \cite{gathmann2007caporaso} : one deforms the constraints by making one of the marked points going to infinity on the left. The only combinatorial types that "survive" the deformation (see definition below) are those that either have the corresponding marked point on a horizontal edge of the curve, or that split into a \textit{floor} containing the marked point and a curve of lower degree, joined to the floor by horizontal edges. We implement this ideology in our setting to provide a way of computing the invariants $N_\Delta^{\partial,\text{trop}}$.
\end{rem}

First, recall the evaluation map $\text{ev}:\mathcal{M}_0(\Delta,N_\RR)\rightarrow\RR^{m-1}.$ If $\mu\in\RR^{m-1}$ is a family of moments for the last $m-1$ ends of the curves, we say that a parametrized tropical curve is solution to the $\Delta$-problem with value $\mu$ if $\text{ev}(\Gamma)=\mu$. In some cases we see $\mu$ as a function $\Delta-\{v_1\}\rightarrow\RR$ that assigns to any unbounded end its moment. If $\widetilde{\Xi}\subset\Delta-\{v_1\}$ is a subset, then
$$\Xi=\{-\sum_{v\in\widetilde{\Xi}}v\}\sqcup\widetilde{\Xi}$$
is still a tropical degree and this notation allows us to consider the $\Xi$-problem with value $\mu|_{\widetilde{\Xi}}$.

\begin{defi}
We call a \textit{deformation vector} a lattice vector $\delta\in\ZZ^{m-1}\subset\RR^{m-1}$. The associated \textit{deformation} of an element $\mu\in\RR^{m-1}$ is the half-line $\mu+\RR_{\geqslant 0}\delta$, parametrized by $t\mapsto\mu+t\delta$.
\end{defi}

\begin{defi}
Let $\Gamma$ be a tropical curve with non-zero multiplicity. On the orthant $\text{Comb}(\Gamma)\times N_\RR$ of curves having the same combinatorial type, the evaluation map is linear with matrix $A$ in the canonical basis :
$$A=\text{ev}|_{\text{Comb}(\Gamma)\times N_\RR}:\RR_{\geqslant 0}^{m-3}\times N_\RR\rightarrow\RR^{m-1}.$$
If $\delta$ is a deformation vector, we say that the combinatorial type of $\Gamma$ \textit{survives} the deformation if the first $m-3$ coordinates of $A^{-1}\delta$ are non-negative.
\end{defi}

\begin{rem}
As the multiplicity of the curve is non-zero, the matrix $A$ is invertible. Hence, any small deformation of the image can be pull back by the evaluation map to a small deformation of the curve, which means a variation of the length of the bounded edges and maybe a translation. The assumption that the first coordinates are non-negative means that going along the deformation $\delta$, the length of the edges are non-decreasing, and the half-line $\text{ev}|_{\text{Comb}(\Gamma)\times N_\RR}^{-1}(\mu)+\RR_{\geqslant 0}A^{-1}\delta$ does not meet the boundary of the orthant, place where the deformation of the curve cannot go on since an edge has zero length and a quadri-valent vertex appears.
\end{rem}

Let $\Gamma$ be a tropical curve solution to the $\Delta$-problem with value $\mu$, with non-zero-multiplicity, and $\delta$ a deformation vector. Let $f$ be some affine function defined on the orthant of the combinatorial type of $\Gamma$, with linear part $\overline{f}$. Then we write
$$\der{f}=\overline{f}(A^{-1}\delta),$$
for the variation of $f$ along the deformation. The functions of interest are the position of a vertex $V$, the length of a bounded edge $E$, and the moment of some edge $E$.\\

\begin{expl}
If $f=V:\text{Comb}(\Gamma)\times N_\RR\rightarrow N_\RR$ is the position of a vertex of $\Gamma$, then $\der{V}$ is the direction in which $V$ moves when the curve is deformed by making $\text{ev}(\Gamma)$ go in direction $\delta$.
\end{expl}
	
From now on, we consider the deformation vector $\delta=(0,\dots,0,-1)$, which means that the moment of the edge directed by $v_{m}$ goes to $-\infty$, and thus the moment of the edge directed by $v_1$ goes to $+\infty$. We look for combinatorial types that survive this deformation.

\begin{prop}
For $t$ large enough, the only combinatorial types that contribute a solution to $N_\Delta^{\partial,\text{trop}}(q,\mu+t\delta)$ are surviving combinatorial types.
\end{prop}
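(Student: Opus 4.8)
The plan is to use the fact that on each combinatorial type with non-zero multiplicity the evaluation map is an \emph{invertible} linear map $A=\text{ev}|_{\text{Comb}(\Gamma)\times N_\RR}$, so that the unique candidate solution over the moving configuration $\mu+t\delta$ is $A^{-1}\mu+tA^{-1}\delta\in\RR^{m-3}\times N_\RR$, whose first $m-3$ coordinates $\ell_j(t)=(A^{-1}\mu)_j+t(A^{-1}\delta)_j$, $j=1,\dots,m-3$, are the lengths of the bounded edges and depend affinely on $t$. Since there are only finitely many combinatorial types of rational tropical curves of degree $\Delta$, it suffices to produce for each of them a threshold $T_\Gamma$ such that for $t>T_\Gamma$ the type is either surviving or contributes nothing, and then set $T=\max_\Gamma T_\Gamma$.

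The core step is then immediate. If $\text{Comb}(\Gamma)$ is \emph{not} surviving, then by definition some $(A^{-1}\delta)_j<0$, hence $\ell_j(t)\to-\infty$ as $t\to+\infty$, so past some $T_\Gamma$ one has $\ell_j(t)<0$; beyond that threshold the corresponding bounded edge would have negative length, and therefore $\text{Comb}(\Gamma)$ contributes no solution to the $\Delta$-problem with value $\mu+t\delta$. Equivalently, for $t>T_\Gamma$ any non-zero-multiplicity combinatorial type that still contributes a solution must be surviving.

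Next I would deal with the zero-multiplicity combinatorial types, i.e. the non top-dimensional ones and the top-dimensional ones with non-injective evaluation map: these contribute nothing as soon as the configuration is generic, so it is enough to check that $\mu+t\delta$ stays generic for $t$ large. The image of $\text{ev}$ restricted to each such type lies in a proper linear subspace of $\RR^{m-1}$ (a dimension count for a non top-dimensional type, since it has positive overvalence; and $\mathrm{im}\,A$ is a proper subspace when $A$ is not injective), hence in some hyperplane, and there are finitely many such types. Since $\mu$ is generic it lies in none of these hyperplanes, so the line $\mu+\RR\delta$ is contained in none of them and meets their union in finitely many points; thus $\mu+t\delta$ is generic for all $t$ larger than some $T_0$, and enlarging $T$ to include $T_0$ completes the argument.

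As for the main obstacle: the statement is quite soft, and the only point requiring care is the bookkeeping that guarantees the set of "bad" parameters $t$ (those where $\mu+t\delta$ fails to be generic, together with the finitely many thresholds $T_\Gamma$ coming from non-surviving types) is bounded, which reduces to the two finiteness observations above (finitely many combinatorial types; a line not contained in a finite union of hyperplanes meets it in finitely many points). The genuinely substantial work — pinning down exactly which surviving combinatorial types occur for large $t$ and showing that their refined multiplicities assemble into the claimed recursive formula — is the content of the rest of this section and lies outside this proposition.
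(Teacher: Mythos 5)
Your argument is correct and is essentially the paper's own proof: a non-surviving type has some coordinate of $A^{-1}\delta$ strictly negative, so the corresponding edge length eventually goes negative and that type drops out of the solution set for $t$ large. The additional bookkeeping you include (finitely many combinatorial types; the line $\mu+\RR\delta$ is eventually generic because it meets the finite union of non-generic hyperplanes in finitely many points) is left implicit in the paper but is correct and does strengthen the exposition.
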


\begin{proof}
Each combinatorial type  $\text{Comb}(\Gamma)$ of tropical curves provides a formal solution to the problem, meaning that we can solve $\text{ev}|_{\text{Comb}(\Gamma)\times N_\RR}(l,V))=\mu$ formally and find the lengths of the edges, but some of them might be negative. The formal solution is a true solution if the length of each edge is non-negative. If $\text{Comb}(\Gamma)$ is not a surviving combinatorial type, the length of some edge strictly decreases when $t$ increases, and it becomes negative if $t$ is large enough, therefore the combinatorial type no longer provides a solution.
\end{proof}

\begin{rem}
As the length of some edges might be constant through the deformation, the survival property is not enough to ensure a combinatorial type ultimately provide a true solution. More precisely, among the combinatorial types differing from one another by a permutation of consecutive indices $i$ having colinear directing vectors $u_i$, exactly one ultimately provides a true solution. This is the place where the reordering appears.
\end{rem}

Using the balancing condition, we see that the moment $\mu_{E_i}$ of $E_i$ is constant equal to minus the sum of coordinates of $\mu|_{\widetilde{\Delta_i}}$. This means that the edge $E_i$ is contained in a fixed line. The vertices of $\Gamma_i$ are fixed because the moment of two of their incident edges are constant. Thus, if we change the moment of $v_1$ and $v_m$, the only way the edges $E_i$ can move is by varying their length while the edges in each $\Gamma_i$ different from $E_i$ are fixed. Moreover, only their extremity $V_i$ in which $\Gamma_i$ meets the chord can move, and these vertices move on the lines that respectively contain $E_i$.\\


For the combinatorial type of $\Gamma$ to survive the deformation, we need to check that neither the length of the edges of the chord nor the length of the bounded edges $E_i$ go to $0$. Recall $\sigma_i=\omega(w_i,w_{i+1})$ the signed multiplicity of the vertex $V_i$. If $\sigma_i<0$, that means that at $V_i$, the chord turns right, as we can see on Figure \ref{right vertex}, and if $\sigma_i>0$, it means that the chord turns left, as we can see on Figure \ref{left vertex}.\\


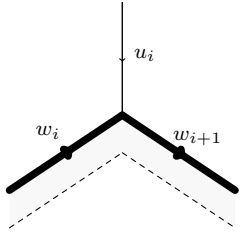
\begin{figure}
\begin{center}

\begin{tikzpicture}[line cap=round,line join=round,>=triangle 45,x=0.5cm,y=0.5cm]

\clip(0.,0.) rectangle (8.,7.);
\fill[line width=2.8pt,dash pattern=on 2pt off 2pt,color=lightgray,fill=lightgray,fill opacity=0.1] (1.,1.) -- (1.,2.) -- (4.,4.) -- (7.,2.) -- (7.,1.) -- (4.,3.) -- cycle;
\draw [dash pattern=on 2pt off 2pt] (1.,1.)-- (4.,3.);
\draw [dash pattern=on 2pt off 2pt] (4.,3.)-- (7.,1.);
\draw [line width=2.8pt] (1.,2.)-- (4.,4.);
\draw [line width=2.8pt] (2.5779778340713233,3.051985222714216) -- (2.5606494264999187,2.9090258602501224);
\draw [line width=2.8pt] (2.5779778340713233,3.051985222714216) -- (2.439350573500081,3.090974139749878);
\draw [line width=2.8pt] (4.,4.)-- (7.,2.);
\draw [line width=2.8pt] (5.577977834071324,2.9480147772857848) -- (5.439350573500081,2.9090258602501233);
\draw [line width=2.8pt] (5.577977834071324,2.9480147772857848) -- (5.560649426499919,3.090974139749878);
\draw (4.,7.)-- (4.,4.);
\draw (4.,5.445331345731765) -- (3.929711730226555,5.5);
\draw (4.,5.445331345731765) -- (4.070288269773445,5.5);
\draw (4.,7.)-- (4.,4.);
\begin{scriptsize}
\draw[color=black] (2.0268098916047643,3.539672008973914) node {$w_i$};
\draw[color=black] (6.0046053069315795,3.3938889309252875) node {$w_{i+1}$};
\draw[color=black] (4.6,5.591048178658161) node {$u_i$};
\end{scriptsize}
\end{tikzpicture}

\caption{Vertex that turns right, i.e. $\sigma_i<0$.\label{right vertex}}
\end{center}
\end{figure}

\begin{figure}
\begin{center}

\begin{tikzpicture}[line cap=round,line join=round,>=triangle 45,x=0.5cm,y=0.5cm]
\clip(0.,0.) rectangle (7.,7.);
\fill[line width=2.8pt,dash pattern=on 2pt off 2pt,color=lightgray,fill=lightgray,fill opacity=0.1] (2.,1.) -- (1.,2.) -- (4.,4.) -- (5.,7.) -- (6.,6.) -- (5.,3.) -- cycle;
\draw [dash pattern=on 2pt off 2pt] (2.,1.)-- (5.,3.);
\draw [dash pattern=on 2pt off 2pt] (5.,3.)-- (6.,6.);
\draw [line width=2.8pt] (1.,2.)-- (4.,4.);
\draw [line width=2.8pt] (2.5779778340713237,3.051985222714216) -- (2.5606494264999182,2.909025860250123);
\draw [line width=2.8pt] (2.5779778340713237,3.051985222714216) -- (2.4393505735000818,3.090974139749877);
\draw [line width=2.8pt] (4.,4.)-- (5.,7.);
\draw [line width=2.8pt] (4.529636136703527,5.588908410110582) -- (4.603726478462344,5.465424507179218);
\draw [line width=2.8pt] (4.529636136703527,5.588908410110582) -- (4.396273521537655,5.534575492820782);
\draw (7.,1.)-- (4.,4.);
\draw (5.4613434238485885,2.5386565761514115) -- (5.5497013121946726,2.5497013121946717);
\draw (5.4613434238485885,2.5386565761514115) -- (5.450298687805328,2.450298687805328);
\begin{scriptsize}
\draw[color=black] (2.026809891604764,3.5396720089739158) node {$w_i$};
\draw[color=black] (3.8,5.98674510479015) node {$w_{i+1}$};
\draw[color=black] (6.,2.706625848696049) node {$u_i$};
\end{scriptsize}
\end{tikzpicture}

\caption{Vertex that turns left, i.e. $\sigma_i>0$.\label{left vertex}}
\end{center}
\end{figure}
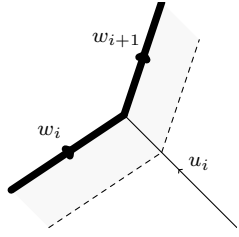

Let $\tau_i=\omega(w_i,V_i)=\omega(w_i,V_{i-1})$ be the moment of the edge directed by $w_i$. The balancing condition ensures that
$$\tau_{i+1}=\tau_i+\mu_{E_i},$$
hence all the moments $\tau_i$ only differ from one another by a constant, and thus all go to $-\infty$ through the deformation process, since $\tau_{p+1}$ is the moment of the edge directed by $v_m$ that goes to $-\infty$ with velocity $-1$ by assumption. We then have $\der{\tau_i}=-1$.\\

We now write down some equations whose derivation allows us to obtain the variations of the lengths of the bounded edges of the curve. We separate the case of edges which are adjacent to the chord from the edges which are part of the chord. We denote by a dependence on $t$ the fact that the functions (position of a vertex, moment of an edge, length of an edge) are taken on the curve of $\text{Comb}(\Gamma)\times N_\RR$ whose evaluation is $\mu+t\delta$, in case the orthant provides a true solution. As $t=0$ provides a true solution, the formal solutions are true solutions at least for small values of $t$. The formal solution os true for any $t$ if the combinatorial type is a surviving one.\\

\begin{itemize}
\item First, let $P_i$ be a fixed point on $E_i$, which is the other extremity if $E_i$ is bounded and any point otherwise. We have
$$V_i(t)=P_i+l_i(t)u_i=P_i+l_i(t)(w_{i+1}-w_i),$$
where $l_i(t)$ is the length of the edge between $V_i$ and $P_i$. Hence,
\begin{align*}
\tau_i=\omega(w_i,V_i(t)) & =\omega(w_i,P_i)+l_i(t)\omega(w_i,w_{i+1}-w_i) \\
 & =\omega(w_i,P_i)+l_i(t)\sigma_i. \\
\end{align*}
Thus, by derivating, we get $-1=\frac{\dd\tau_i}{\dd t}=\sigma_i\frac{\dd l_i}{\dd t}$. This means that :
\begin{itemize}
\item If at $V_i$ the chord turns left ($\sigma_i>0$), then $l_i$ decreases as $t$ goes to $+\infty$, the vertex $V_i$ goes up the edge $E_i$ and will meet a vertex if there is one, which is the case if and only if $E_i$ is bounded.
\item If at $V_i$ the chord turns right ($\sigma_i<0$), then $l_i$ increases as $t$ goes to $+\infty$.
\end{itemize}


\begin{figure}
\begin{center}

\begin{tikzpicture}[line cap=round,line join=round,>=triangle 45,x=1.0cm,y=1.0cm]
\clip(1.,1.) rectangle (9.,9.);
\fill[color=lightgray,fill=lightgray,fill opacity=0.1] (1.,4.) -- (1.,3.) -- (5.,3.) -- (7.,5.) -- (9.,5.) -- (9.,6.) -- (6.,6.) -- (4.,4.) -- cycle;
\draw [dash pattern=on 2pt off 2pt] (1.,3.)-- (5.,3.);
\draw [dash pattern=on 2pt off 2pt] (5.,3.)-- (7.,5.);
\draw [line width=2.8pt] (1.,4.)-- (4.,4.);
\draw [line width=2.8pt] (2.587098033697032,4.) -- (2.5,3.8983856273534627);
\draw [line width=2.8pt] (2.587098033697032,4.) -- (2.5,4.101614372646536);
\draw [line width=2.8pt] (4.,4.)-- (6.,6.);
\draw [line width=2.8pt] (5.061587610255186,5.061587610255185) -- (5.071852211964385,4.928147788035616);
\draw [line width=2.8pt] (5.061587610255186,5.061587610255185) -- (4.928147788035617,5.071852211964383);
\draw (7.,1.)-- (4.,4.);
\draw (5.464073894017808,2.5359261059821923) -- (5.5461907076913874,2.5461907076913897);
\draw (5.464073894017808,2.5359261059821923) -- (5.453809292308612,2.453809292308612);
\draw (4.,8.)-- (6.,6.);
\draw (5.035926105982192,6.964073894017807) -- (4.953809292308611,6.95380929230861);
\draw (5.035926105982192,6.964073894017807) -- (5.046190707691389,7.046190707691388);
\draw [line width=2.8pt] (6.,6.)-- (9.,6.);
\draw [line width=2.8pt] (7.587098033697032,6.) -- (7.5,5.898385627353463);
\draw [line width=2.8pt] (7.587098033697032,6.) -- (7.5,6.101614372646536);
\draw [dash pattern=on 2pt off 2pt] (7.,5.)-- (9.,5.);
\draw (6.,6.)-- (7.,5.);
\begin{scriptsize}
\draw[color=black] (3.8128350614391566,4.394116645787751) node {$V_{i-1}$};
\draw[color=black] (2.2160377769935726,4.336051289989729) node {$w_{i-1}$};
\draw[color=black] (4.6354276019111245,5.38122769435411) node {$w_i$};
\draw [fill=uuuuuu] (4.,4.) circle (1.5pt);
\draw[color=uuuuuu] (4.064451603230582,4.132822544696656) node {$I$};
\draw[color=black] (5.9515756666662725,2.787641802042499) node {$u_{i-1}$};
\draw[color=black] (6.2,6.2) node {$V_i$};
\draw[color=black] (4.3,6.919959623001671) node {$u_i$};
\draw[color=black] (7.703213899906579,6.41672653941882) node {$w_{i+1}$};
\end{scriptsize}
\end{tikzpicture}

\caption{Deformation of an edge of the chord.\label{edge chord}}
\end{center}
\end{figure}
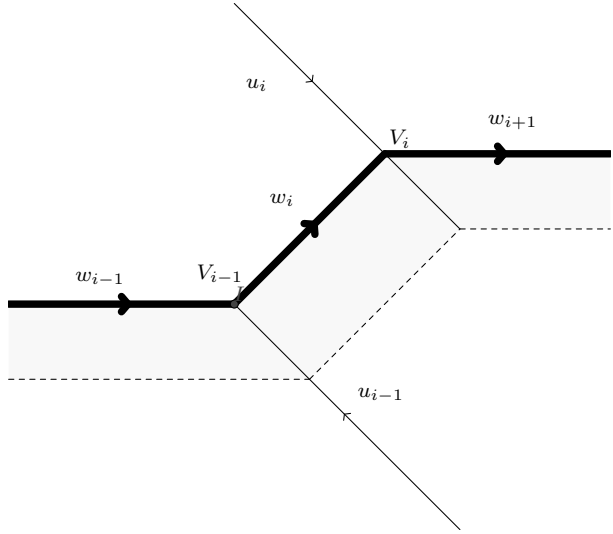

\item We consider the edge between the vertices $V_{i-1}$ and $V_i$. Let $\lambda_i$ be its length so that we have
$$V_i(t)-V_{i-1}(t)=\lambda_i(t)w_i.$$
By derivating with respect to $t$ and using previous notations, we get
$$\frac{\dd l_i}{\dd t}(w_{i+1}-w_{i})-\frac{\dd l_{i-1}}{\dd t}(w_{i}-w_{i-1}) = \frac{\dd \lambda_i}{\dd t}w_{i},$$
which is equivalent to
$$-\frac{w_{i+1}-w_{i}}{\sigma_i}+\frac{w_{i}-w_{i-1}}{\sigma_{i-1}}=\frac{\dd \lambda_i}{\dd t}w_{i}$$
since $\frac{\dd l_i}{\dd t}=-\frac{1}{\sigma_i}$. Multiplying by $\sigma_i\sigma_{i-1}$ we get
$$\sigma_{i-1}(w_{i+1}-w_{i})-\sigma_i(w_{i}-w_{i-1})=-\sigma_i\sigma_{i-1}\frac{\dd \lambda_i}{\dd t}w_{i}.$$
At this point we can check that $\sigma_{i-1}w_{i+1}+\sigma_i w_{i-1}$ is indeed colinear to $w_i$ :
\begin{align*}
\omega(w_i,\sigma_{i-1}w_{i+1}+\sigma_i w_{i-1}) & = \sigma_{i-1}\omega(w_i,w_{i+1})+\sigma_i \omega(w_i,w_{i-1}) \\
 & = \sigma_{i-1}\sigma_i-\sigma_i\sigma_{i-1}\\
 & = 0.\\
\end{align*}
In order to check the sign of $\frac{\dd \lambda_i}{\dd t}$, we can evaluate any linear form on this vector equation, for instance $\omega(w_{i-1},-)$, which gives us
$$\sigma_{i-1}^2\sigma_i\frac{\dd \lambda_i}{\dd t} = \sigma_{i-1}^2+\sigma_i\sigma_{i-1}-\sigma_{i-1}\omega(w_{i-1},w_{i+1}).$$
By noticing that
\begin{align*}
\omega(u_{i-1},u_i) & =\omega(w_i-w_{i-1},w_{i+1}-w_i)\\
&=\sigma_i+\sigma_{i-1}-\omega(w_{i-1},w_{i+1}),\\
\end{align*}
after dividing by $\sigma_{i-1}$, we are left with
$$\sigma_{i-1}\sigma_i\frac{\dd \lambda_i}{\dd t}=\omega(u_{i-1},u_i).$$
Hence, $\frac{\dd \lambda_i}{\dd t}$ is non-negative if and only if $\omega(\sigma_{i-1}u_{i-1},\sigma_i u_i)$ is non-negative.
\end{itemize}

We now can describe the conditions for a combinatorial type to survive our deformation.

\begin{prop}
Let $\Gamma$ be a parametrized tropical curve. In the above notations, the combinatorial type of $\Gamma$ survives the deformation $t\rightarrow +\infty$ if and only if
\begin{itemize}
\item[-] the edge $E_i$ is an unbounded edge whenever $\sigma_i>0$;
\item[-] for each $i$, we have $\omega(\sigma_{i-1}u_{i-1},\sigma_i u_i)\geqslant 0$.
\end{itemize}
\end{prop}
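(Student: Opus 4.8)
The statement is, in essence, a repackaging of the derivative computations carried out in the paragraphs just above, so the plan is to enumerate the bounded edges of $\Gamma$ and, for each one, read off from those computations whether its length has non-negative derivative along the deformation; by definition the combinatorial type of $\Gamma$ survives exactly when all of these derivatives are $\geqslant 0$ (equivalently, the first $m-3$ coordinates of $A^{-1}\delta$ are non-negative).

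First I would observe that, since $\Gamma$ is a tree, deleting the chord partitions the edge set of $\Gamma$ into the chord edges $w_1,\dots,w_{p+1}$, of which $w_1=-v_1$ and $w_{p+1}=v_m$ are unbounded, together with the edges of the components $\Gamma_1,\dots,\Gamma_p$, in each of which the distinguished outgoing edge is $E_i$. Consequently the bounded edges fall into exactly three families: (i) the edges of the $\Gamma_i$ other than $E_i$; (ii) those $E_i$ which happen to be bounded; (iii) the bounded chord edges, namely the edge joining $V_{i-1}$ to $V_i$ for $2\leqslant i\leqslant p$. The argument then consists in examining the three families in turn.

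For family (i): as already established, every vertex of $\Gamma_i$ other than $V_i$ is fixed (two of its incident edges having constant moment), so every edge of $\Gamma_i$ except $E_i$ has constant length along the deformation and imposes no condition. For family (ii): combining $\der{\tau_i}=-1$ with $\tau_i=\omega(w_i,P_i)+l_i(t)\sigma_i$ yields $\der{l_i}=-1/\sigma_i$, which is non-negative if and only if $\sigma_i<0$ (here $\sigma_i\neq 0$ since $V_i$ is not flat); when $E_i$ is bounded, $l_i$ is one of the $m-3$ length coordinates, so survival forces $\sigma_i<0$, i.e. contrapositively $\sigma_i>0$ forces $E_i$ to be unbounded, while when $E_i$ is unbounded $l_i$ is not a coordinate and imposes nothing --- consistently, $V_i$ then just slides along the fixed line carrying $E_i$ with no vertex to run into. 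For family (iii): from $V_i(t)-V_{i-1}(t)=\lambda_i(t)w_i$ and $\der{l_j}=-1/\sigma_j$ one obtains, as computed above, $\sigma_{i-1}\sigma_i\der{\lambda_i}=\omega(u_{i-1},u_i)$; since $\sigma_{i-1}\sigma_i\neq 0$ and $\omega(\sigma_{i-1}u_{i-1},\sigma_i u_i)=\sigma_{i-1}\sigma_i\,\omega(u_{i-1},u_i)$, the sign of $\der{\lambda_i}$ equals that of $\omega(\sigma_{i-1}u_{i-1},\sigma_i u_i)$, so $\der{\lambda_i}\geqslant 0$ if and only if $\omega(\sigma_{i-1}u_{i-1},\sigma_i u_i)\geqslant 0$.

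Assembling the three families gives precisely the two bullet conditions, which proves the equivalence. The only point that is not a verbatim consequence of the earlier computations, and hence the one deserving a sentence of justification, is that families (i)--(iii) genuinely exhaust the bounded edges and that family (i) is frozen --- i.e. that prescribing the moments of the unbounded ends of a rational subtree rigidifies it; everything else is just plugged in from the derivative formulas on the preceding pages. The borderline case $\der{\lambda_i}=0$ (when $u_{i-1}$ and $u_i$ are colinear) is permitted by the notion of \emph{survives} and is exactly what the reordering remark later accounts for.
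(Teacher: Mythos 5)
Your proof is correct and follows essentially the same route as the paper's own: after noting that surviving the deformation means all $m-3$ edge-length coordinates have non-negative derivative along $\delta$, you sort the bounded edges into the same three families (internal edges of the $\Gamma_i$, the edges $E_i$, and the chord edges) and read off the sign conditions from the derivative computations on the preceding pages, with the observation that $\omega(\sigma_{i-1}u_{i-1},\sigma_iu_i)=\sigma_{i-1}\sigma_i\,\omega(u_{i-1},u_i)$ has the same sign as $\der{\lambda_i}=\omega(u_{i-1},u_i)/(\sigma_{i-1}\sigma_i)$. The paper states the same three cases more tersely; your extra sentence on why the families exhaust the bounded edges and why family (i) is frozen is a welcome clarification rather than a departure.
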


\begin{proof}
The statement follows from the previous description: a combinatorial type survives the deformation if and only if the length of the bounded edges is non-decreasing along the deformation. All the cases have previously been studied:
\begin{itemize}
\item[-] The length of the bounded edges of the chord is non-decreasing, hence for each $i$ we have $\omega(\sigma_{i-1}u_{i-1},\sigma_i u_i)\geqslant 0$.
\item[-] The bounded edges inside some $\Gamma_i$ but different from $E_i$ are constant.
\item[-] The edges $E_i$ have a non-decreasing length unless $\sigma_i>0$, and then we need the edge to be unbounded.
\end{itemize}
\end{proof}

We can now prove the recursive formula.

\begin{proof}
Let $\mu\in\RR^{m-1}$ be any value. Thanks to the  previous proposition, up to a change of $\mu$ by $\mu+t\delta$ for a very large $t$, we can assume that the combinatorial types of the solutions to the $\Delta$-problem with the value $\mu$ are surviving combinatorial types for our deformation. However, as noticed, the subtlety is that not all surviving combinatorial types provide a true solution. Nevertheless, each of the curves $\Gamma_i'$ is solution to the $\Delta_i$-problem with value $\mu|_{\widetilde{\Delta_i}}$. We use this to construct the solutions.\\

Let $\Gamma$ be a solution to the $\Delta$-problem with value $\mu+t\delta$ for $t$ large enough. By assumption it has a surviving combinatorial type. Moreover, the $\Gamma_i'$ provide solutions to the underlyings $\Delta_i$-problems with values $\mu|_{\widetilde{\Delta_i}}$, and the multiplicity of $\Gamma$ factors in the following way :
$$m_\Gamma^q=\prod_1^p [\sigma_i]_q m_{\Gamma_i}^q.$$

Conversely, any combinatorial type can be described by the ordered partition $\Delta-\{v_1,v_m\}=\bigsqcup\Delta_i$ along with the combinatorial type of the curves $\Gamma_i$. Let $(\Gamma_i)$ be a family of solutions to the respective $\Delta_i$-problems with respective values $\mu|_{\widetilde{\Delta_i}}$, and we try to glue them into a global solution, for $t$ large enough. The gluing is given by the order of the partition in which we glue $\Gamma_1,\dots,\Gamma_p$ on the chord linking the unbounded end $1$ to the unbounded end $m$. We have a formal solution obtained by resolving the length of the edges on the combinatorial type, and need to check that they indeed provide a true solution.\\

The lengths of the bounded edges inside the graphs $\Gamma_i$ are constant. The only orders that have non-decreasing lengths for the edges of the chords and the edges $E_i$ when $t$ goes to $+\infty$ are the orders considered in the formula. If the length of some of these edges is negative but increases through the deformation, it becomes positive for $t$ large enough. Conversely, if the order is not one of the considered, some edge length decreases along the deformation process and is ultimately negative, so the combinatorial type ceases to provide a solution. Finally, if the length of some edge of the chord is constant through the deformation, meaning that consecutive incidents vectors to the chord are colinear, there is a unique order between them that matches the order of their moments, \textit{i.e.} the order in which a transversal oriented line would meet them, and provides positive lengths for these edges, hence the consideration of the order on $\Gamma_1,\dots,\Gamma_p$ up to a reordering of consecutive colinear vectors $u_i$.\\

Finally, by putting together the contribution of the different possibilities of $\Gamma_i$, we get
$$\sum_{\Gamma_i\in\mathcal{S}(\mu|_{\widetilde{\Delta_i}})} \prod_1^p [\sigma_i]_q m_{\Gamma_i}^q = \prod_1^p [\sigma_i]_q N_{\Delta_i}^{\partial,\text{trop}},$$
and thus the desired formula.
\end{proof}

Provided that the deformation is big enough, the moments of all the unbounded edges except $v_1$ and $v_m$ are really small regarding these two specific moments. It means that when we look at a solution to the $\Delta$-problem with our value of $\mu$, all the edges adjacent to the chord seem to go through the origin of $\RR^2$ (Although they do not, but they are not far from it if we look at them from far far away) while the chord goes around the origin, changing its direction when meeting an adjacent edge in the right order.\\

This decomposition can be compared with the usual floor decomposition of tropical curves with an $h$-transverse degree coming from the tropical Caporaso-Harris formula of \cite{gathmann2007caporaso}. However, here the floors have a more complicated shape. For instance, even for degree $d$ curves, and the two edges whose moments vary are directed by $(-1,0)$ and $(0,-1)$, the chord may not be a usual floor and can make a loop around the origin as we can see on Figure \ref{quartic deformation}. The figure shows a quartic curve, with two unbounded ends going to infinity. The movement of these ends is depicted with an arrow. The region colored in grey is the zone through which the curve travels through the deformation. We have a similar situation for a cubic depicted on Figure \ref{cubic deformation}.


\begin{figure}
\begin{center}

\begin{tikzpicture}[line cap=round,line join=round,>=triangle 45,x=0.3cm,y=0.3cm]
\clip(0.,0.) rectangle (21.,19.);
\fill[line width=0.0pt,dash pattern=on 3pt off 3pt,color=black,fill=lightgray,fill opacity=0.1] (18.,0.) -- (17.99230503900855,10.99230503900855) -- (10.,11.) -- (4.,8.) -- (2.,6.) -- (2.,4.) -- (5.,1.) -- (7.,1.) -- (9.,3.) -- (14.,13.) -- (14.,17.) -- (0.,17.) -- (0.,16.) -- (13.,16.) -- (13.,12.) -- (9.,4.) -- (7.,2.) -- (5.,2.) -- (3.,4.) -- (3.,6.) -- (5.,8.) -- (9.,10.) -- (17.,10.) -- (17.,0.) -- cycle;
\draw (0.,4.)-- (3.,4.);
\draw (0.,6.)-- (3.,6.);
\draw (5.,0.)-- (5.,2.);
\draw (7.,2.)-- (7.,0.);
\draw (9.,4.)-- (9.,0.);
\draw (0.,8.)-- (5.,8.);
\draw (9.,10.)-- (16.,17.);
\draw [line width=2.pt] (9.,10.)-- (5.,8.);
\draw [line width=2.pt] (5.,8.)-- (3.,6.);
\draw [line width=2.pt] (3.,6.)-- (3.,4.);
\draw [line width=2.pt] (3.,4.)-- (5.,2.);
\draw [line width=2.pt] (5.,2.)-- (7.,2.);
\draw [line width=2.pt] (7.,2.)-- (9.,4.);
\draw [line width=2.pt] (9.,10.)-- (17.,10.);
\draw [line width=2.pt] (17.,10.)-- (17.,0.);
\draw (17.,10.)-- (20.,13.);
\draw [line width=2.pt] (9.,4.)-- (13.,12.);
\draw (13.,12.)-- (17.,16.);
\draw [line width=2.pt] (13.,12.)-- (13.,16.);
\draw (13.,16.)-- (15.,18.);
\draw [line width=2.pt] (13.,16.)-- (0.,16.);
\draw [line width=2.pt,dash pattern=on 3pt off 3pt] (18.,0.)-- (17.99230503900855,10.99230503900855);
\draw [line width=2.pt,dash pattern=on 3pt off 3pt,color=ttqqqq] (17.99230503900855,10.99230503900855)-- (10.,11.);
\draw [line width=2.pt,dash pattern=on 3pt off 3pt,color=ttqqqq] (10.,11.)-- (4.,8.);
\draw [line width=2.pt,dash pattern=on 3pt off 3pt,color=ttqqqq] (4.,8.)-- (2.,6.);
\draw [line width=2.pt,dash pattern=on 3pt off 3pt,color=ttqqqq] (2.,6.)-- (2.,4.);
\draw [line width=2.pt,dash pattern=on 3pt off 3pt,color=ttqqqq] (2.,4.)-- (5.,1.);
\draw [line width=2.pt,dash pattern=on 3pt off 3pt,color=ttqqqq] (5.,1.)-- (7.,1.);
\draw [line width=2.pt,dash pattern=on 3pt off 3pt,color=ttqqqq] (7.,1.)-- (9.,3.);
\draw [line width=2.pt,dash pattern=on 3pt off 3pt,color=ttqqqq] (9.,3.)-- (14.,13.);
\draw [line width=2.pt,dash pattern=on 3pt off 3pt,color=ttqqqq] (14.,13.)-- (14.,17.);
\draw [line width=2.pt,dash pattern=on 3pt off 3pt,color=ttqqqq] (14.,17.)-- (0.,17.);
\draw [line width=0.4pt,dash pattern=on 3pt off 3pt,color=ttqqqq] (18.,0.)-- (17.99230503900855,10.99230503900855);
\draw [line width=0.4pt,dash pattern=on 3pt off 3pt,color=ttqqqq] (17.99230503900855,10.99230503900855)-- (10.,11.);
\draw [line width=0.4pt,dash pattern=on 3pt off 3pt,color=ttqqqq] (10.,11.)-- (4.,8.);
\draw [line width=0.4pt,dash pattern=on 3pt off 3pt,color=ttqqqq] (4.,8.)-- (2.,6.);
\draw [line width=0.4pt,dash pattern=on 3pt off 3pt,color=ttqqqq] (2.,6.)-- (2.,4.);
\draw [line width=0.4pt,dash pattern=on 3pt off 3pt,color=ttqqqq] (2.,4.)-- (5.,1.);
\draw [line width=0.4pt,dash pattern=on 3pt off 3pt,color=ttqqqq] (5.,1.)-- (7.,1.);
\draw [line width=0.4pt,dash pattern=on 3pt off 3pt,color=ttqqqq] (7.,1.)-- (9.,3.);
\draw [line width=0.4pt,dash pattern=on 3pt off 3pt,color=ttqqqq] (9.,3.)-- (14.,13.);
\draw [line width=0.4pt,dash pattern=on 3pt off 3pt,color=ttqqqq] (14.,13.)-- (14.,17.);
\draw [line width=0.4pt,dash pattern=on 3pt off 3pt,color=ttqqqq] (14.,17.)-- (0.,17.);
\draw [line width=0.4pt,dash pattern=on 3pt off 3pt,color=ttqqqq] (0.,17.)-- (0.,16.);
\draw [line width=0.4pt,dash pattern=on 3pt off 3pt,color=ttqqqq] (0.,16.)-- (13.,16.);
\draw [line width=0.4pt,dash pattern=on 3pt off 3pt,color=ttqqqq] (13.,16.)-- (13.,12.);
\draw [line width=0.4pt,dash pattern=on 3pt off 3pt,color=ttqqqq] (13.,12.)-- (9.,4.);
\draw [line width=0.4pt,dash pattern=on 3pt off 3pt,color=ttqqqq] (9.,4.)-- (7.,2.);
\draw [line width=0.4pt,dash pattern=on 3pt off 3pt,color=ttqqqq] (7.,2.)-- (5.,2.);
\draw [line width=0.4pt,dash pattern=on 3pt off 3pt,color=ttqqqq] (5.,2.)-- (3.,4.);
\draw [line width=0.4pt,dash pattern=on 3pt off 3pt,color=ttqqqq] (3.,4.)-- (3.,6.);
\draw [line width=0.4pt,dash pattern=on 3pt off 3pt,color=ttqqqq] (3.,6.)-- (5.,8.);
\draw [line width=0.4pt,dash pattern=on 3pt off 3pt,color=ttqqqq] (5.,8.)-- (9.,10.);
\draw [line width=0.4pt,dash pattern=on 3pt off 3pt,color=ttqqqq] (9.,10.)-- (17.,10.);
\draw [line width=0.4pt,dash pattern=on 3pt off 3pt,color=ttqqqq] (17.,10.)-- (17.,0.);
\draw [line width=0.4pt,dash pattern=on 3pt off 3pt,color=ttqqqq] (17.,0.)-- (18.,0.);
\draw [line width=2.8pt] (16.,1.)-- (19.,1.);
\draw [line width=2.8pt] (18.063094110719252,1.) -- (17.781547055359628,0.6809133372590904);
\draw [line width=2.8pt] (18.063094110719252,1.) -- (17.781547055359628,1.3190866627409075);
\draw [line width=2.8pt] (17.5,1.) -- (17.218452944640372,0.6809133372590904);
\draw [line width=2.8pt] (17.5,1.) -- (17.218452944640372,1.3190866627409075);
\draw [line width=2.8pt] (1.,15.)-- (1.,18.);
\draw [line width=2.8pt] (1.,17.06309411071925) -- (1.3190866627409106,16.781547055359624);
\draw [line width=2.8pt] (1.,17.06309411071925) -- (0.6809133372590898,16.781547055359624);
\draw [line width=2.8pt] (1.,16.5) -- (1.3190866627409106,16.218452944640376);
\draw [line width=2.8pt] (1.,16.5) -- (0.6809133372590898,16.218452944640376);
\end{tikzpicture}

\caption{Image of a quartic curve during the deformation.\label{quartic deformation}}

\end{center}
\end{figure}
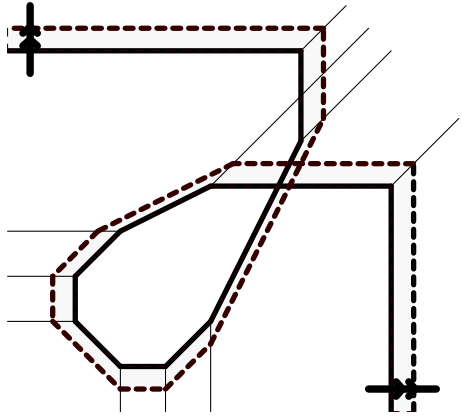


\begin{figure}
\begin{center}

\begin{tikzpicture}[line cap=round,line join=round,>=triangle 45,x=0.4cm,y=0.4cm]
\clip(-3.,-3.) rectangle (16.,16.);
\fill[line width=0.pt,color=lightgray,fill=lightgray,fill opacity=0.1] (0.,-2.) -- (1.,-2.) -- (1.,7.) -- (3.,9.) -- (7.,11.) -- (11.,11.) -- (11.,7.) -- (9.,3.) -- (7.,1.) -- (-2.,1.) -- (-2.,0.) -- (7.,0.) -- (9.,2.) -- (12.,8.) -- (12.,12.) -- (8.,12.) -- (2.,9.) -- (0.,7.) -- cycle;
\draw [line width=2.pt] (1.,7.)-- (3.,9.);
\draw [line width=2.pt] (7.,1.)-- (9.,3.);
\draw [line width=2.pt] (9.,3.)-- (11.,7.);
\draw [line width=2.pt] (3.,9.)-- (7.,11.);
\draw [line width=2.pt] (7.,11.)-- (11.,11.);
\draw [line width=2.pt] (11.,11.)-- (11.,7.);
\draw (7.,11.)-- (11.,15.);
\draw (11.,11.)-- (13.,13.);
\draw (11.,7.)-- (15.,11.);
\draw (3.,9.)-- (-2.,9.);
\draw (1.,7.)-- (-2.,7.);
\draw [line width=2.pt] (1.,7.)-- (1.,-2.);
\draw [line width=2.pt] (7.,1.)-- (-2.,1.);
\draw (7.,1.)-- (7.,-2.);
\draw (9.,3.)-- (9.,-2.);
\draw [line width=2.pt,dash pattern=on 3pt off 3pt] (-2.,0.)-- (7.,0.);
\draw [line width=2.pt,dash pattern=on 3pt off 3pt] (7.,0.)-- (9.,2.);
\draw [line width=2.pt,dash pattern=on 3pt off 3pt] (9.,2.)-- (12.,8.);
\draw [line width=2.pt,dash pattern=on 3pt off 3pt] (12.,8.)-- (12.,12.);
\draw [line width=2.pt,dash pattern=on 3pt off 3pt] (12.,12.)-- (8.,12.);
\draw [line width=2.pt,dash pattern=on 3pt off 3pt] (8.,12.)-- (2.,9.);
\draw [line width=2.pt,dash pattern=on 3pt off 3pt] (2.,9.)-- (0.,7.);
\draw [line width=2.pt,dash pattern=on 3pt off 3pt] (0.,7.)-- (0.,-2.);
\draw [line width=2.8pt] (-1.,2.)-- (-1.,-1.);
\draw [line width=2.8pt] (-1.,0.043256618374201465) -- (-1.2664336392817153,0.2716283091871003);
\draw [line width=2.8pt] (-1.,0.043256618374201465) -- (-0.7335663607182852,0.2716283091871003);
\draw [line width=2.8pt] (-1.,0.5) -- (-1.2664336392817153,0.7283716908128979);
\draw [line width=2.8pt] (-1.,0.5) -- (-0.7335663607182852,0.7283716908128979);
\draw [line width=2.8pt] (2.,-1.)-- (-1.,-1.);
\draw [line width=2.8pt] (0.043256618374202874,-1.) -- (0.2716283091871015,-0.7335663607182857);
\draw [line width=2.8pt] (0.043256618374202874,-1.) -- (0.2716283091871015,-1.2664336392817164);
\draw [line width=2.8pt] (0.5,-1.) -- (0.7283716908128988,-0.7335663607182857);
\draw [line width=2.8pt] (0.5,-1.) -- (0.7283716908128988,-1.2664336392817164);
\end{tikzpicture}

\caption{Image of a cubic curve during the deformation.\label{cubic deformation}}
\end{center}
\end{figure}
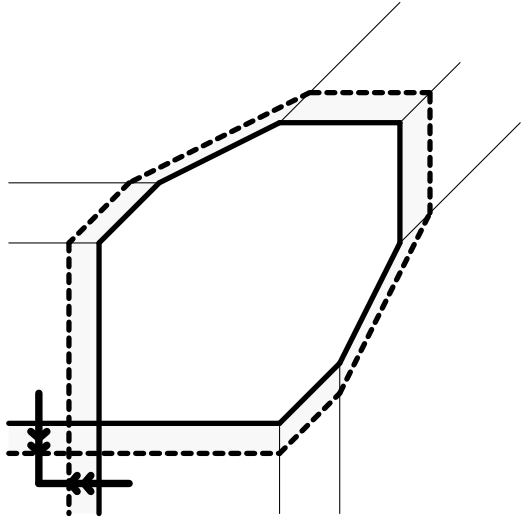


\section{Computations}

We now provide a few values $N_{\Delta}^{\partial,\text{trop}}$ for various families $\Delta\subset N$, computed using the recursive formula. Because of the exponential complexity of the algorithm, we only manage to make computations for small degrees. Concerning curves of degree $d$ in $\CC P^2$, computations can be done by hand up to degree $5$ or $6$. Degree $7$ seems to be out of reach without computer assistance.\\

Let $d\geqslant 1$ be an integer, and let $\lambda\vdash d$ be a partition of $d$. We denote by $N_d^{\partial,\text{trop}}(\lambda)$ the polynomial $N_\Delta^{\partial,\text{trop}}$ when $\Delta=\{ (-e_1)^d,(e_1+e_2)^d,-\lambda_1 e_2,-\lambda_2e_2,\dots\}$. These are the degrees that appear in the proof of the Caporaso-Harris formula in \cite{gathmann2007caporaso}.\\

\begin{prop}
We have :
\begin{itemize}[label=-]
\item $N_1^\partrop(1)=N_2^\partrop(1^2)=1$,
\item $N_2^\partrop(2)=q^{1/2}+q^{-1/2}$,
\item $N_3^\partrop(1^3)=q+7+q^{-1}$,
\item[ ]$N_3^\partrop(2,1)=q^{3/2}+6q^{1/2}+6q^{-1/2}+q^{-3/2}$,
\item[ ]$N_3^\partrop(3)=q^{2}+5q+6+5q^{-1}+q^{-2}$,
\item $N_4^\partrop(1^4)=q^3+10q^2+55q+172+55q^{-1}+10q^{-2}+q^{-3}$,
\item[ ]$N_4^\partrop(2,1^2)=q^{7/2}+9q^{5/2}+45q^{3/2}+133q^{1/2}+133q^{-1/2}+45q^{-3/2}+9q^{-5/2}+q^{-7/2}$,
\item[ ]$N_4^\partrop(3,1)=q^4+8q^3+36q^2+96q+117+96q^{-1}+36q^{-2}+8q^{-3}+q^{-4}$,
\item[ ]$N_4^\partrop(4)=q^{9/2}+7q^{7/2}+28q^{5/2}+68q^{3/2}+88q^{1/2}+88q^{-1/2}+68q^{-3/2}+28q^{-5/2}+8q^{-7/2}+q^{-9/2}$,
\item[ ]$N_4^\partrop(2^2)=q^4+8q^3+36q^2+104q+150+104q^{-1}+36q^{-2}+8q^{-3}+q^{-4}$,
\item $N_5^\partrop(1^5)=q^6+13q^5+91q^4+455q^3+1695q^2+5023q+11185+5023q^{-1}+1695q^{-2}+455q^{-3}+91q^{-4}+13q^{-5}+q^{-6}$.
\end{itemize}
\end{prop}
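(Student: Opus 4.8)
The plan is to prove the proposition by a direct, bottom-up application of the recursive formula. At every stage I would apply it with, say, $v_1$ one of the $(-e_1)$-ends and $v_m$ one of the $(e_1+e_2)$-ends; then $w_1=-v_1=e_1$, $w_{p+1}=v_m=e_1+e_2$, and the factor attached to the vertex $V_i$ is its refined multiplicity $[|\sigma_i|]_q$ with $\sigma_i=\omega(w_i,w_{i+1})$. The base case is $N_1^\partrop(1)=1$: here $\Delta$ is the unit triangle, the unique tropical curve through a generic point at infinity is a single trivalent vertex of multiplicity $1$, and $[1]_q=1$. More generally, whenever $|\Delta|=3$ the moduli space is a single point and $N_\Delta^\partrop$ is just the refined multiplicity of its unique trivalent vertex.

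Next I would note that the recursion terminates: on the right-hand side each degree $\Delta_i=\widetilde{\Delta_i}\sqcup\{u_i\}$ has $|\Delta_i|=|\widetilde{\Delta_i}|+1\leqslant(m-2)+1<m$ unbounded ends, so the complexity strictly drops, while a part with $|\widetilde{\Delta_i}|=1$ contributes the trivial factor $N_{\Delta_i}^\partrop=1$. Thus, fixing a target $N_d^\partrop(\lambda)$ from the list, only finitely many auxiliary degrees are produced, and I would compute all of them before assembling the answer. Concretely, for each degree the computation is: (i) enumerate the ordered set-partitions of $\Delta-\{v_1,v_m\}$; (ii) for each, compute $u_i=-\sum_{v\in\widetilde{\Delta_i}}v$, the partial sums $w_i$, and the $\sigma_i$; (iii) discard the partitions violating $\sigma_i>0\Rightarrow|\widetilde{\Delta_i}|=1$ or $\omega(\sigma_i u_i,\sigma_{i+1}u_{i+1})\geqslant 0$, and identify those differing only by a transposition of two consecutive parts with colinear $u_i$; (iv) sum $\prod_i[|\sigma_i|]_q\,N_{\Delta_i}^\partrop(q)$ over the survivors. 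Running this for $d=2$ gives $N_2^\partrop(1^2)=1$ and $N_2^\partrop(2)=[2]_q=q^{1/2}+q^{-1/2}$ (the only surviving partition there is $\{-2e_2\},\{-e_1,e_1+e_2\}$, with $\sigma_1=2$, $\sigma_2=-1$ and both pieces contributing $1$), feeding these back to get the $d=3$ values, then $d=4$, and finally $N_5^\partrop(1^5)$.

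The main obstacle is purely the combinatorial bookkeeping in (i)--(iii): the number of ordered set-partitions grows super-exponentially in $|\Delta|$, and one must correctly handle the support condition $\sigma_i>0\Rightarrow|\widetilde{\Delta_i}|=1$ (whose effect depends on the running direction $w_i$ of the chord), the inequality $\omega(\sigma_i u_i,\sigma_{i+1}u_{i+1})\geqslant 0$, and the quotient by the colinearity reordering without over- or under-counting. A further subtlety is that the auxiliary degrees $\Delta_i$ need not stay of the shape $\{(-e_1)^{d'},(e_1+e_2)^{d'},\dots\}$ --- a part with unequal numbers of $(-e_1)$- and $(e_1+e_2)$-ends yields a $u_i$ with a nonzero $e_1$-component --- so a slightly larger family of degrees has to be tabulated en route. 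Once the organisation is fixed, what remains is routine: expand each $[|\sigma_i|]_q=\frac{q^{|\sigma_i|/2}-q^{-|\sigma_i|/2}}{q^{1/2}-q^{-1/2}}$, multiply, and collect terms. Useful running checks are that every $N_\Delta^\partrop$ is invariant under $q\leftrightarrow q^{-1}$ (each factor $[|\sigma_i|]_q$ is), that its leading coefficient comes out $1$, and that specialising $q\to 1$ recovers $\sum_\Gamma m_\Gamma^\CC$, the count of the same $\partial$-problem with complex multiplicities.
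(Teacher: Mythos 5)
Your proposal is correct and follows essentially the same route as the paper: both carry out a bottom-up application of the recursive formula taking $v_1=-e_1$ and $v_m=e_1+e_2$, reducing to base cases with fewer unbounded ends and assembling the polynomials degree by degree (the paper simply states "the proof is a straightforward computation" with this choice of ends and illustrates the resulting combinatorics with figures). Your small correction of writing $[|\sigma_i|]_q$ in place of the paper's $[\sigma_i]_q$ is right, since the refined vertex multiplicities are defined via absolute values, and your sanity checks (palindromy in $q$, unit leading coefficient, $q\to 1$ recovering the complex count) are sound.
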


The proof is a straightforward computation. For each degree $\Delta$ one chooses two specific vectors and makes the associated unbounded edges going to infinity, reducing the computation of $N_\Delta^\partrop$ to the computations of invariants with families of smaller size. We here show some of the computations for degree $d$ curves, choosing unbounded ends directed by $(-1,0)$ and $(1,1)$. We explain only the main features, and draw the tropical curves resulting from the deformation. The shape of the tropical curves illustrate some of the involved phenomena.

\paragraph{Very low degrees.}
The values of $N_1^\partrop(1),N_2^\partrop(1^2)$ and $N_2^\partrop(2)$ are easy to find since for each choice of boundary conditions, there is only one tropical curve matching the constraints. The only curve for $N_2(2)$ has a vertex of complex multiplicity $2$, leading to the value $N_2^\partrop(2)=q^{1/2}+q^{-1/2}$.\\

For curves of degree $3$, the choice of unbounded edges going to infinity leads to tropical curves having a floor decomposition in the sense of \cite{gathmann2007caporaso}, and the computation is reduced to the value of $N_1^\partrop$, $N_2^\partrop(1^2)$ and $N_2^\partrop(2)$, which we already know. The appearance of a floor means that the chord degenerates into a toric divisor, which is the coordinate axis $y=0$ of $\CC P^2$ in our case. The computation leads to the value $q+7+q^{-1}$.

\paragraph{Curves of degree $4$ and $5$.}
For curves of degree $4$, we still get a contribution of the curves having a floor in the sense of \cite{gathmann2007caporaso}. Their contribution is
$$N_4^\partrop(1^4)=q^3+10q^2+55q+172+55q^{-1}+10q^{-2}+q^{-3}.$$
However, it would be wrong to assume that all the contributing curves are of this form. There are in fact $6$ additional curves having the shape of Figure \ref{loop quartic}, where the chord makes a loop around the origin. This means that the chord degenerates to the union of all toric divisors, rather than going to only one, as it would happen if it degenerated on a floor. The six curves come from the six possible repartitions of the bottom unbounded ends in two groups of two.

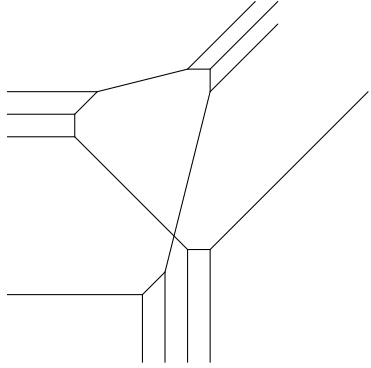
\begin{figure}
\begin{center}
\begin{tikzpicture}[line cap=round,line join=round,>=triangle 45,x=0.3cm,y=0.3cm]
\clip(-3.,0.) rectangle (13.,16.);
\draw (-3.,3.)-- (3.,3.);
\draw (3.,3.)-- (4.,4.);
\draw (4.,4.)-- (6.,12.);
\draw (6.,12.)-- (6.,13.);
\draw (6.,13.)-- (5.,13.);
\draw (5.,13.)-- (1.,12.);
\draw (1.,12.)-- (0.,11.);
\draw (0.,11.)-- (0.,10.);
\draw (0.,10.)-- (5.,5.);
\draw (5.,5.)-- (6.,5.);
\draw (6.,5.)-- (13.,12.);
\draw (0.,11.)-- (-3.,11.);
\draw (1.,12.)-- (-3.,12.);
\draw (0.,10.)-- (-3.,10.);
\draw (5.,13.)-- (8.,16.);
\draw (6.,13.)-- (9.,16.);
\draw (6.,12.)-- (9.,15.);
\draw (3.,3.)-- (3.,0.);
\draw (4.,4.)-- (4.,0.);
\draw (5.,5.)-- (5.,0.);
\draw (6.,5.)-- (6.,0.);
\end{tikzpicture}
\caption{Quartic curve with a loop}
\label{loop quartic}
\end{center}
\end{figure}

For curves of degree $5$, the chord can yet again degenerate into a floor, or just as in the degree $4$, degenerate into a loop. Only this time, contrarily to the degree $4$ case, there might still be unbounded ends leftover, leading to a line attached to the loop, as we can see on Figure \ref{loops quintics}.

\begin{figure}
\begin{center}
\begin{tabular}{cc}
\begin{tikzpicture}[line cap=round,line join=round,>=triangle 45,x=0.25cm,y=0.25cm]
\clip(-3.,0.) rectangle (13.,16.);
\draw (2.,3.)-- (3.,3.);
\draw (3.,3.)-- (4.,4.);
\draw (4.,4.)-- (6.,12.);
\draw (6.,12.)-- (6.,13.);
\draw (6.,13.)-- (5.,13.);
\draw (5.,13.)-- (1.,12.);
\draw (1.,12.)-- (0.,11.);
\draw (0.,11.)-- (0.,10.);
\draw (0.,10.)-- (5.,5.);
\draw (5.,5.)-- (6.,5.);
\draw (6.,5.)-- (13.,12.);
\draw (0.,11.)-- (-3.,11.);
\draw (1.,12.)-- (-3.,12.);
\draw (0.,10.)-- (-3.,10.);
\draw (5.,13.)-- (8.,16.);
\draw (6.,13.)-- (9.,16.);
\draw (6.,12.)-- (9.,15.);
\draw (3.,3.)-- (3.,0.);
\draw (4.,4.)-- (4.,0.);
\draw (5.,5.)-- (5.,0.);
\draw (6.,5.)-- (6.,0.);
\draw (2.,3.)-- (2.,10.5);
\draw (2.,10.5)-- (0.5,10.5);
\draw (2.,10.5)-- (3.,11.5);
\draw (1.5,2.5)-- (2.,3.);
\draw (1.5,2.5)-- (-3.,2.5);
\draw (1.5,2.5)-- (1.5,0.);
\end{tikzpicture}
&
\begin{tikzpicture}[line cap=round,line join=round,>=triangle 45,x=0.25cm,y=0.25cm]
\clip(-3.,0.) rectangle (13.,16.);
\draw (-3.,3.)-- (3.,3.);
\draw (3.,3.)-- (4.,4.);
\draw (4.,4.)-- (6.,12.);
\draw (6.,12.)-- (6.,13.);
\draw (6.,13.)-- (5.,13.);
\draw (1.,12.)-- (0.,11.);
\draw (0.,11.)-- (0.,10.);
\draw (0.,10.)-- (5.,5.);
\draw (5.,5.)-- (6.,5.);
\draw (6.,5.)-- (13.,12.);
\draw (0.,11.)-- (-3.,11.);
\draw (1.,12.)-- (-3.,12.);
\draw (0.,10.)-- (-3.,10.);
\draw (5.,13.)-- (8.,16.);
\draw (6.,13.)-- (9.,16.);
\draw (6.,12.)-- (9.,15.);
\draw (3.,3.)-- (3.,0.);
\draw (4.,4.)-- (4.,0.);
\draw (5.,5.)-- (5.,0.);
\draw (6.,5.)-- (6.,0.);
\draw (2.,10.5)-- (1.,10.5);
\draw (1.9969224263361691,10.5)-- (2.,9.);
\draw (1.9969224263361691,10.5)-- (4.3,12.8);
\draw (4.3,12.8)-- (5.,13.);
\draw (4.3,12.8)-- (3.8,12.8);
\draw (1.,12.)-- (3.2,12.8);
\draw (3.2,12.8)-- (3.8,12.8);
\draw (3.2,12.8)-- (7.,16.5);
\end{tikzpicture} \\
\end{tabular}
\caption{Quintics degenerating to a loop and a line}
\label{loops quintics}
\end{center}
\end{figure}
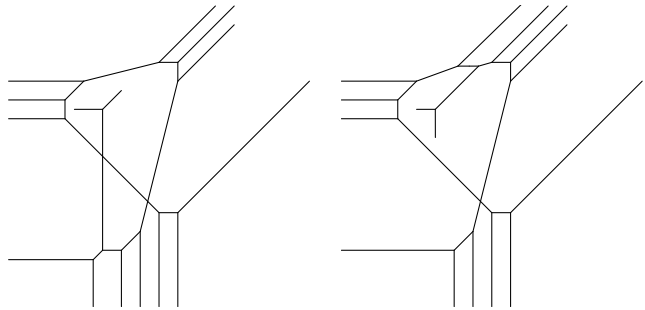

\paragraph{Degree $6$ and higher.}
Up to degree $5$, thanks to the particular choice of unbounded edges going to infinity, the computations were reduced only to values of the form $N_d^\partrop(\lambda)$ for some $\lambda$, as in the classical Caporaso-Harris formula. Once again, it would be wrong to assume that these values are sufficient to compute $N_d^\partrop(\lambda)$, in the sense that some smart choice of formula reduces the computation of $N_d^\partrop(\lambda)$ to the computation of some $N_l^\partrop(\mu)$ for $l<d$. Starting from $d=6$, the curves resulting from the eviction of the chord (\textit{i.e.} the curves $\Gamma_i$) may not be of degree $l$, meaning that the degree of the plane curve is not a standard triangle of size $l$, as we can see on Figure \ref{loop sextics}: the remaining curves might be of degree $1$ or $2$ as it is the case in $(c)$ and $(d)$, but can also have a more complicated shape, as we can see on $(a)$ and $(b)$.\\

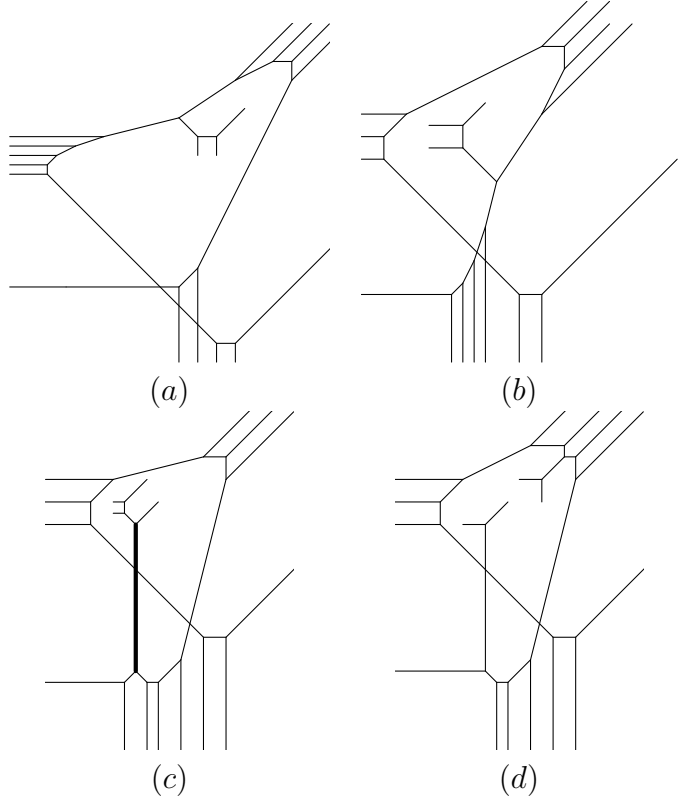
\begin{figure}
\begin{center}
\begin{tabular}{cc}
\begin{tikzpicture}[line cap=round,line join=round,>=triangle 45,x=0.25cm,y=0.25cm]
\clip(-3.,0.) rectangle (14.,18.);
\draw (0.,4.)-- (6.,4.);
\draw (6.,4.)-- (7.,5.);
\draw (7.,5.)-- (12.,15.);
\draw (12.,15.)-- (12.,16.);
\draw (12.,16.)-- (11.,16.);
\draw (11.,16.)-- (9.,15.);
\draw (9.,15.)-- (6.,13.);
\draw (6.,13.)-- (2.,12.);
\draw (2.,12.)-- (0.5,11.5);
\draw (0.5,11.5)-- (-0.5,11.);
\draw (-0.5,11.)-- (-1.,10.5);
\draw (-1.,10.5)-- (-1.,10.);
\draw (-1.,10.)-- (8.,1.);
\draw (8.,1.)-- (9.,1.);
\draw (9.,1.)-- (21.,13.);
\draw (6.,4.)-- (6.,0.);
\draw (7.,5.)-- (7.,0.);
\draw (8.,1.)-- (8.,0.);
\draw (9.,1.)-- (9.,0.);
\draw (12.,15.)-- (15.,18.);
\draw (12.,16.)-- (14.,18.);
\draw (11.,16.)-- (14.,19.);
\draw (9.,15.)-- (13.,19.);
\draw (0.,4.)-- (-3.,4.);
\draw (-1.,10.)-- (-3.,10.);
\draw (-1.,10.5)-- (-3.,10.5);
\draw (-0.5,11.)-- (-3.,11.);
\draw (0.5,11.5)-- (-3.,11.5);
\draw (2.,12.)-- (-3.,12.);
\draw (6.,13.)-- (7.,12.);
\draw (7.,12.)-- (7.,11.);
\draw (7.,12.)-- (8.,12.);
\draw (8.,12.)-- (8.,11.);
\draw (8.,12.)-- (9.5,13.5);
\end{tikzpicture}
&
\begin{tikzpicture}[line cap=round,line join=round,>=triangle 45,x=0.3cm,y=0.3cm]

\clip(0.,0.) rectangle (14.,16.);
\draw (0.,3.)-- (4.,3.);
\draw (4.,3.)-- (4.5,3.5);
\draw (4.5,3.5)-- (5.,4.5);
\draw (5.,4.5)-- (5.5,6.);
\draw (5.5,6.)-- (6.,8.);
\draw (6.,8.)-- (8.,11.);
\draw (8.,11.)-- (9.,13.);
\draw (9.,13.)-- (9.,14.);
\draw (9.,14.)-- (8.,14.);
\draw (8.,14.)-- (2.,11.);
\draw (2.,11.)-- (1.,10.);
\draw (1.,10.)-- (1.,9.);
\draw (1.,9.)-- (7.,3.);
\draw (7.,3.)-- (8.,3.);
\draw (8.,3.)-- (14.,9.);
\draw (4.,3.)-- (4.,0.);
\draw (4.5,3.5)-- (4.5,0.);
\draw (5.,4.5)-- (5.,0.);
\draw (7.,3.)-- (7.,0.);
\draw (8.,3.)-- (8.,0.);
\draw (5.5,6.)-- (5.5,0.);
\draw (1.,9.)-- (0.,9.);
\draw (1.,10.)-- (0.,10.);
\draw (2.,11.)-- (0.,11.);
\draw (8.,11.)-- (12.,15.);
\draw (9.,13.)-- (11.,15.);
\draw (9.,14.)-- (11.,16.);
\draw (8.,14.)-- (10.,16.);
\draw (6.,8.)-- (4.5,9.5);
\draw (4.5,9.5)-- (3.,9.5);
\draw (4.5,9.5)-- (4.5,10.5);
\draw (4.5,10.5)-- (3.,10.5);
\draw (4.5,10.5)-- (5.5,11.5);
\end{tikzpicture}
\\
$(a)$ & $(b)$ \\
\begin{tikzpicture}[line cap=round,line join=round,>=triangle 45,x=0.3cm,y=0.3cm]
\clip(-2.,0.) rectangle (9.,15.);
\draw (3.,3.)-- (4.,4.);
\draw (4.,4.)-- (6.,12.);
\draw (6.,12.)-- (6.,13.);
\draw (6.,13.)-- (5.,13.);
\draw (5.,13.)-- (1.,12.);
\draw (-2.,12.)-- (1.,12.);
\draw (1.,12.)-- (0.,11.);
\draw (0.,11.)-- (0.,10.);
\draw (0.,10.)-- (5.,5.);
\draw (5.,5.)-- (6.,5.);
\draw (6.,5.)-- (13.,12.);
\draw (0.,11.)-- (-3.,11.);
\draw (0.,10.)-- (-3.,10.);
\draw (5.,13.)-- (8.,16.);
\draw (6.,13.)-- (9.,16.);
\draw (6.,12.)-- (9.,15.);
\draw (3.,3.)-- (3.,0.);
\draw (4.,4.)-- (4.,0.);
\draw (5.,5.)-- (5.,0.);
\draw (6.,5.)-- (6.,0.);
\draw (3.,3.)-- (2.5,3.);
\draw (2.5,3.)-- (2.,3.5);
\draw (2.,3.5)-- (1.5,3.);
\draw (1.5,3.)-- (-2.5,3.);
\draw (1.5,3.)-- (1.5,0.);
\draw (2.5,3.)-- (2.5,0.);
\draw [line width=1.6pt] (2.,3.5)-- (2.,10.);
\draw (2.,10.)-- (3.,11.);
\draw (2.,10.)-- (1.5,10.5);
\draw (1.5,10.5)-- (1.5,11.);
\draw (1.5,11.)-- (2.5,12.);
\draw (1.5,11.)-- (1.,11.);
\draw (1.5,10.5)-- (1.,10.5);
\end{tikzpicture}
&
\begin{tikzpicture}[line cap=round,line join=round,>=triangle 45,x=0.3cm,y=0.3cm]
\clip(-2.,0.) rectangle (9.,15.);
\draw (3.,3.)-- (4.,4.);
\draw (4.,4.)-- (6.,12.);
\draw (6.,12.)-- (6.,13.);
\draw (1.,12.)-- (0.,11.);
\draw (0.,11.)-- (0.,10.);
\draw (0.,10.)-- (5.,5.);
\draw (5.,5.)-- (6.,5.);
\draw (6.,5.)-- (13.,12.);
\draw (0.,11.)-- (-3.,11.);
\draw (0.,10.)-- (-3.,10.);
\draw (6.,13.)-- (9.,16.);
\draw (6.,12.)-- (9.,15.);
\draw (3.,3.)-- (3.,0.);
\draw (4.,4.)-- (4.,0.);
\draw (5.,5.)-- (5.,0.);
\draw (6.,5.)-- (6.,0.);
\draw (3.,3.)-- (2.5,3.);
\draw (2.5,3.)-- (2.,3.5);
\draw (2.5,3.)-- (2.5,0.);
\draw (2.,3.5)-- (2.,10.);
\draw (2.,10.)-- (3.,11.);
\draw (2.,10.)-- (1.,10.);
\draw (2.,3.5)-- (-3.5,3.5);
\draw (4.5,12.)-- (4.5,11.);
\draw (4.5,12.)-- (3.5,12.);
\draw (6.,13.)-- (5.5,13.);
\draw (5.5,13.)-- (4.5,12.);
\draw (5.5,13.)-- (5.5,13.5);
\draw (5.5,13.5)-- (8.,16.);
\draw (5.5,13.5)-- (4.,13.5);
\draw (4.,13.5)-- (7.5,17.);
\draw (4.,13.5)-- (1.,12.);
\draw (1.,12.)-- (-3.,12.);
\end{tikzpicture}
\\
$(c)$ & $(d)$ \\
\end{tabular}
\caption{Different examples of sextics with a loop : in $(a)$ and $(b)$ the degree of the remaining curve is not a standard triangle, in $(c)$ it is a degree $2$ curve, in $(d)$ it is two lines.}
\label{loop sextics}
\end{center}
\end{figure}

For degree $7$, the situation becomes even more complicated since the growing number of unbounded edges increases the number of possible degrees for the curves $\Gamma_i$, and the chord can now make two loops. The number of loops that the chord can makes goes higher with the degree.\\

Finally, the recursive formula involves every $N_\Delta^\partrop$, associated with different toric surfaces, contrarily to the usual Caporaso-Harris formula for curves in $\CC P^2$, which restricts to specific degrees. Furthermore, the formula applies for curves of any degree, while the Caporaso-Harris formula restricts to $h$-transverse polygons.

\bibliographystyle{plain}
\bibliography{biblio}

{\ncsc Institut de Math\'ematiques de Jussieu - Paris Rive Gauche\\[-15.5pt] 

Sorbonne Universit\'e\\[-15.5pt] 

4 place Jussieu,
75252 Paris Cedex 5,
France} \\[-15.5pt]

\medskip

{\it and}

\medskip

{\ncsc D\'epartement de math\'ematiques et applications\\[-15.5pt]

Ecole Normale Sup\'erieure\\[-15.5pt]

45 rue d'Ulm, 75230 Paris Cedex 5, France} \\[-15.5pt]

{\it E-mail address}: {\ntt thomas.blomme@imj-prg.fr}

\end{document}